\newtheorem{theorem}{Theorem}[section]
\newtheorem{lemma}[theorem]{Lemma}
\theoremstyle{definition}
\newtheorem{definition}[theorem]{Definition}
\newtheorem{example}[theorem]{Example}
\newtheorem{corollary}[theorem]{Corollary}
\theoremstyle{remark}
\newtheorem{remark}[theorem]{Remark}
\numberwithin{equation}{section}
\begin{document}

\def\C{\mathbb C}
\def\R{\mathbb R}
\def\X{\mathbb X}
\def\Z{\mathbb Z}
\def\Y{\mathbb Y}
\def\Z{\mathbb Z}
\def\N{\mathbb N}
\def\cal{\mathcal}

\def\cal{\mathcal}
\def\b{\mathcal B}
\def\c{\mathcal C}
\def\cc{\mathbb C}
\def\x{\mathbb X}
\def\r{\mathbb R}
\def\uu{(U(t,s))_{t\ge s}}
\def\vv{(V(t,s))_{t\ge s}}
\def\xx{(X(t,s))_{t\ge s}}
\def\yy{(Y(t,s))_{t\ge s}}
\def\zz{(Z(t,s))_{t\ge s}}
\def\ss{(S(t))_{t\ge 0}}
\def\tt{(T(t,s))_{t\ge s}}
\def\rr{(R(t))_{t\ge 0}}

\title[Massera Theorem]{
On asymptotic periodic solutions of fractional differential equations and applications}
\author[V.T Luong]{Vu Trong Luong}
\address{VNU University of Education, Vietnam National University at Hanoi, 144 Xuan Thuy, Cau Giay, Hanoi, Vietnam}
\email{vutrongluong@gmail.com}

\author[N.D. Huy]{Nguyen Duc Huy}
\address{VNU University of Education, Vietnam National University at Hanoi, 144 Xuan Thuy, Cau Giay, Hanoi, Vietnam}
\email{huynd@vnu.edu.vn}

\author[N.V. Minh]{Nguyen Van Minh}
\address{Department of Mathematics and Statistics, University of Arkansas at Little Rock, 2801 S University Ave, Little Rock, AR 72204, USA}
\email{mvnguyen1@ualr.edu}

\author[N.N. Vien]{Nguyen Ngoc Vien}
\address{Faculty of Foundations, Hai Duong University, Hai Duong City, Vietnam}
\email{uhdviennguyen.edu@gmail.com}

\date{\today}

\subjclass[2010]{Primary: 34K37, 34G10; Secondary: 34K30, 45J05}
\keywords{ Asymptotic behavior, polynomial boundedness, stability, spectrum of a function on the half line}
\thanks{A part of this article was written while the first author was	visiting Vietnam Institute for Advanced Study in Mathematics (VIASM). He	would like to thank the Institute for its hospitality.}
\thanks{The authors are grateful to the anonymous referee for his carefully reading the manuscript and suggestions to improve its presentation}
\begin{abstract} In this paper we study the asymptotic behavior of solutions of fractional differential equations of the form
$
D^{\alpha}_Cu(t)=Au(t)+f(t), u(0)=x, 0<\alpha\le1, ( *)
$
where $D^{\alpha}_Cu(t)$ is the derivative of the function $u$ in the Caputo's sense, $A$ is a linear operator in a Banach space $\X$ that may be unbounded and $f$ satisfies the property that $\lim_{t\to \infty} (f(t+1)-f(t))=0$ which we will call asymptotic $1$-periodicity. By using the spectral theory of functions on the half line we derive analogs of Katznelson-Tzafriri and Massera Theorems. Namely, we give sufficient conditions in terms of spectral properties of the operator $A$ for all asymptotic mild solutions of Eq. (*) to be asymptotic $1$-periodic, or there exists an asymptotic mild solution that is asymptotic $1$-periodic.
\end{abstract}
\maketitle

\section{Introduction} \label{section 1}
We are interested in the asymptotic behavior of solutions to linear fractional differential equations of the form
\begin{equation}\label{eq:0}
D^{\alpha}_Cu(t)=Au(t)+f(t), u(0)=x, 0<\alpha\le1,
\end{equation}
where $D^{\alpha}_Cu(t)$ is the derivative of the function $u$ in the Caputo's sense, $A$ is a linear operator in a Banach space $\X$ that may be unbounded and $f$ satisfies the property that $\lim_{t\to \infty} (f(t+1)-f(t))=0$ which we will call "asymptotic $1$-periodicity". 

\medskip
In recent decades fractional differential equations are of growing interests as this kind of equations allows us to model more complex processes. We refer the reader to the monographs \cite{hil,kilsritru} for an account of applications in Physics and Engineering. For general results and concepts in abstract spaces the reader is referred to \cite{baemeenan,baz,clegrilon}. 

\medskip
Asymptotic behavior of solutions of differential equations is a central topic in the qualitative theory of differential equations and dynamical systems that is rich of results and ideas. The very first behavior of solutions that are of interest to mathematicians because of its applicability is the periodicity of solutions. There are many classic results in this direction that are models for subsequent researches. Among many such works we refer the reader to very well known results by Massera \cite{mas} and Katznelson-Tsafriri \cite{kattza} due to their very different techniques of study. These results are recently extended to  wider classes of equations and solutions in \cite{luoloiminmat} and \cite{PAMS1}.
It is natural to ask if the concepts and results in these recent works could be extended to a larger class of Eq.(\ref{eq:0}). In this paper we will take an attempt to address this question.

\medskip
More specifically, we look for conditions under which 
some (or all) solutions $x(\cdot )$ are asymptotic $1$-periodic. We will prove an analog of the Katznelson-Tzafiri Theorem and Massera Theorem for this kind of solutions for Eq.(\ref{eq:0}). Our method is to use the spectral theory of functions and the technique of decomposition rather than the period map as it is often used in the area since the equation is actually not periodic and its homogeneous equation does not generate a dynamical system even in the autonomous case. In the recent paper \cite{luoloiminmat} similar problems in case $\alpha =1$ were studied for evolution equations with periodic coefficients based on a spectral theory of functions. Namely, in \cite{luoloiminmat} the spectrum of a function $f\in BUC(\R^+,\X)$ (the space of all uniformly continuous and bounded functions on $[0,\infty )$) is defined to be the subset $\sigma (f)$ of the unit circle $\Gamma$ in the complex plane consisting of all points $\xi_0$ such that the complex function $R(\lambda ,\bar S)\bar f$ has no analytic extension to any neighborhood of $\xi_0$. Here $S$ is the translation by $1$ and $\bar S$ is the induced operator by $S$ in the quotient space $\Y:=BUC(\R^+,\X)/C_0(\R^+,\X)$ (the space of continuous functions approaching zero at infinity and $R(\lambda ,\bar S)$ stands for the resolvent of $\bar S$. In \cite{luoloiminmat}, the asymptotic $1$-periodicity of a function $f\in BUC(\R^+,\X)$ is characterized by the property $\sigma (f)\subset \{ 1\}$. Note that for fractional equations (\ref{eq:0}), using the transform $R(\lambda ,\bar S)\bar f$ to capture the spectrum of an asymptotic mild solution of (\ref{eq:0}) does not work. As shown in \cite{luomin}, the spectrum $sp(f)$ determined by the transform $R(\lambda ,\bar {\cal D})\bar f$, where ${\cal D}$ is the differential operator $d/dt$ and $\bar {\cal D}$ stands for the induced operator by ${\cal D}$ in $\Y$, can be used to study the asymptotic behavior of Eq.(\ref{eq:0}). Our next step is to characterize the asymptotic $1$-periodicity in terms of $sp(f)$. This will be done by proving the identity $\sigma (f) =\overline{e^{i \cdot sp(f)}}$ that is based on the Weak Spectral Mapping Theorem for $C_0$-groups of isometries. This characterization of the asymptotic $1$-periodicity allows us to study the asymptotic $1$-periodicity of all asymptotic mild solutions existence of Eq.(\ref{eq:0}), an analog of the Katznelson-Tzifriri for fractional equations. We also prove an analog of Massera Theorem for this class of equations based on the technique of decomposition. Our main results are stated in Theorems \ref{the main}, \ref{the main2} and \ref{the main3}.

\section{Preliminaries and Notations}
\subsection{Notations}
Throughout this paper we will denote by $\R,\R^+, \C$ the real line $(-\infty,\infty)$, half line $[0,+\infty )$ and the complex plane. For $J$ being either $\R$ or $\R^+$, the notation $BUC(J,\X)$ stands for the function space of all bounded and uniformly continuous functions taking values in a (complex) Banach space $\X$ with sup-norm. Throughout the paper we will use the following notations
\begin{align*}
C_{0}(\R^+,\X) &:=\{ f\in BUC   (\R^+,\X): \ \lim_{t\to \infty}\| f(t)\|   =0\} \\
g_\alpha(t) &:=\frac{t^{\alpha-1}}{\Gamma(\alpha)}, t>0,\alpha>0.
 \end{align*}
For a complex number $z$, $\Re z$ denotes its real part and $\Im z$ stands for its imaginary part. In this paper the single valued power function $\lambda^\alpha$ of the complex variable $\lambda$ is uniquely defined as
$\lambda^\alpha =| \lambda | ^\alpha e^{ i \alpha\cdot arg (\lambda)}$, with $-\pi < arg (\lambda) < \pi$.

\subsection{Fractional differentiation in Caputo's sense}
Let $\alpha >0, t\ge a,$ and $a$ is a fixed number. Then, the fractional operator
\begin{align}
J^{\alpha}_a u(t):=(g_\alpha \ast u)(t)=\displaystyle\int_{a}^{t}g_\alpha(t-\tau)u(\tau)d\tau
\end{align}
is called fractional Riemann-Liouville integral of degree $\alpha$. The function
\begin{equation*}
D^{\alpha}_{C}u(t):=
\begin{cases}
J^{n-\alpha}u^{(n)}(t)=\dfrac{1}{\Gamma(n-\alpha)}\displaystyle\int_{a}^t\dfrac{u^{(n)}(\tau)}{(t-\tau)^{\alpha+1-n}}d\tau, &{n-1}<{\alpha}<n\in\mathbb{N},\\
u^{(n)}(t),&\alpha=n\in\mathbb{N},
\end{cases}
\end{equation*}
 is called the fractional derivative in Caputo's sense of degree $\alpha$. By this notation we have for $0<\alpha \le 1$
 $$J^\alpha _a D^{\alpha}_C u(t)=u(t)-u(a).$$

\subsection{Cauchy Problem}
For a fixed $0<\alpha \le 1,$ consider the Cauchy problem
\begin{equation}\label{fde}
D^{\alpha}_Cu(t)=Au(t), u(0)=x, 
\end{equation}
where $A$ is generally an unbounded linear operator.
The well-posedness of \eqref{fde}  is equivalent to that of the problem
\begin{equation}\label{eq:4}
u(t)=x+\int_0^tg_\alpha(t-s)Au(s)ds.
\end{equation}
The reader is referred to the monograph \cite{pru} for an extensive study of the well-posedness of this kind of equations when $A$ is generally an unbounded operator. Recent extensions for more general equations can be found in \cite{keylizwar} and their references for more details.

\medskip
Let us consider inhomogeneous linear equations of the form
\begin{equation}\label{eq:2a}
D^{\alpha}_Cu(t)=Au(t)+f(t), t\geq 0,
\end{equation}
where $f\in BUC(\R^+,\X)$ is given.

\begin{definition}
A mild solution $u$ of Eq.(\ref{eq:2a}) on $\R^+$ is a continuous function on $\R^+$ such that, for each $t\in\R^+$, $J^\alpha u(t) \in D(A)$ and
$$u(t)=AJ^\alpha u(t)+J^\alpha f(t)+u(0).$$
\end{definition}

\subsection{A spectral theory of bounded Functions}
In this paper we will use the spectral theory of functions defined on the half line as it is outlined in \cite{luomin}. The spectrum of a bounded function on the half line is defined as below:

\medskip
Let $\cal D$ denote the differentiation operator $d/dt$ in $BUC  (\R^+,\X)$ with domain
$$
D(\cal D) =\{ f\in BUC (\R^+,\X): \exists f', \ f'\in BUC  (\R^+,\X)\} .
$$
 \begin{lemma}
 The translation semigroup $(S(t)_{t\ge 0})$ in $BUC  (\R^+,\X)$ is strongly continuous with  ${\cal D}$ as its infinitesimal generator. 
 \end{lemma}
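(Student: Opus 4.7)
The plan is to verify in turn the three defining properties of a $C_0$-semigroup and then identify its generator with $\cal D$.

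First I would dispose of the algebraic and boundedness properties. Setting $(S(t)f)(s):=f(t+s)$, the semigroup law $S(t+r)=S(t)S(r)$ and the fact $S(0)=I$ are immediate from the definition, and $\|S(t)f\|_\infty\le\|f\|_\infty$ shows $S(t)$ is a contraction on $BUC(\R^+,\X)$; uniform continuity of $S(t)f$ follows from that of $f$ since the modulus of continuity is translation-invariant. Next, for strong continuity at $0$, given $f\in BUC(\R^+,\X)$ and $\varepsilon>0$, uniform continuity of $f$ yields $\delta>0$ such that $\|f(\sigma)-f(\sigma')\|<\varepsilon$ whenever $|\sigma-\sigma'|<\delta$; hence for $0\le t<\delta$,
\[
\|S(t)f-f\|_\infty=\sup_{s\ge 0}\|f(s+t)-f(s)\|\le\varepsilon,
\]
which gives $\lim_{t\downarrow 0}S(t)f=f$ in $BUC$, i.e. strong continuity.

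The core of the argument is identifying the infinitesimal generator $A$ of $\ss$ with $\cal D$. For the inclusion $\cal D\subseteq A$, take $f\in D(\cal D)$, so that $f'\in BUC(\R^+,\X)$. Writing $f(s+t)-f(s)=\int_0^t f'(s+r)\,dr$, we get
\[
\left\|\frac{S(t)f-f}{t}-f'\right\|_\infty
=\sup_{s\ge 0}\left\|\frac{1}{t}\int_0^t\bigl(f'(s+r)-f'(s)\bigr)\,dr\right\|,
\]
and uniform continuity of $f'$ forces this to tend to $0$ as $t\downarrow 0$, so $f\in D(A)$ and $Af=f'=\cal Df$.

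For the reverse inclusion $A\subseteq \cal D$, which I expect to be the most delicate step, suppose $f\in D(A)$. Then $(S(t)f-f)/t\to Af$ uniformly on $\R^+$, and in particular pointwise at every $s\ge 0$; this shows that $f$ possesses a right derivative equal to $(Af)(s)$ at each $s\ge 0$, with the convergence uniform in $s$. Using the standard fact that a continuous function with continuous right derivative is continuously differentiable, together with $Af\in BUC(\R^+,\X)$, one concludes $f'$ exists, belongs to $BUC(\R^+,\X)$, and coincides with $Af$, so $f\in D(\cal D)$. The only subtlety here is promoting the uniform existence of the right derivative to full differentiability on the open half line and to uniform continuity of $f'$; this is handled by noting that the right derivative $Af$ is itself uniformly continuous, so the classical argument (apply the mean value theorem to $g(s):=f(s+h)-f(s)-hAf(s)$) yields two-sided differentiability and closes the identification $A=\cal D$.
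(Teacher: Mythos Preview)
Your proof is correct and is essentially the standard textbook argument; the paper, by contrast, gives no proof at all and simply defers to Engel--Nagel \cite{engnag}. So the two ``approaches'' differ only in that you actually carry out the verification while the paper cites it.

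One minor remark on the last step: the phrase ``apply the mean value theorem to $g(s):=f(s+h)-f(s)-hAf(s)$'' is a little loose for $\X$-valued functions, since the classical mean value theorem fails in Banach spaces. The cleanest way to finish $A\subseteq\cal D$ is to use the general identity $S(t)f-f=\int_0^t S(r)Af\,dr$, valid for $f\in D(A)$; evaluated pointwise this reads $f(s+t)-f(s)=\int_0^t (Af)(s+r)\,dr$, and continuity of $Af$ immediately gives two-sided differentiability with $f'=Af\in BUC(\R^+,\X)$. This bypasses the right-derivative promotion argument entirely.
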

 \begin{proof}
 This is a well known result (see e.g. \cite{engnag}).
 \end{proof}

\subsection{Operator $\tilde{\cal D}$} In this subsection we recall some construction made in \cite{luomin}.
Since $C_{0  }(\R^+,\X)$ is a closed subspace $BUC   (\R^+,\X)$, and is invariant under the translation semigroup $(S(t)_{t\ge 0})$.
In the space $BUC   (\R^+,\X)$ we introduce the following relation $R$:
\begin{equation}
 f \ R \ g \ \mbox{if and only if} \ \ f -g \in C_{0  }(\R^+,\X) .
\end{equation}
This is an equivalence relation and the quotient space $\Y:= BUC   (\R^+,\X)/ R$ is a Banach space. We will also denote the norm in this quotient space $\Y$ by $\| \cdot \| $ if it does not cause any confusion.

\medskip
The class containing $f\in BUC   (\R^+,\X)$ will be denoted by $\tilde{f}$. Define $\tilde{\cal D}$ in $ \Y=BUC   (\R^+,\X)/ R$ as follows:
\begin{eqnarray}
D( \tilde{\cal D}) &:=&\{ \tilde{f} \in \Y : \exists u\in \tilde{f}, u\in D(\cal D)\} .
\end{eqnarray}
If $f\in D( \tilde{\cal D}) $, we set
\begin{equation}
\tilde{D}\tilde{f} := \widetilde {\cal Du} 
\end{equation}
for some $u\in \tilde f$. The following lemma will show that this $\tilde{D}$ is well defined as an operator in $\Y$.
\begin{lemma}
With the above notations, $\tilde{\cal D}$ is a well defined single valued linear operator in $\Y$. \end{lemma}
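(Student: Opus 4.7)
My goal is to prove two things: single-valuedness (the value $\widetilde{{\cal D}u}$ in the definition does not depend on the choice of representative $u \in \tilde f \cap D({\cal D})$) and linearity. Linearity is routine: if $u \in \tilde f \cap D({\cal D})$ and $v \in \tilde g \cap D({\cal D})$, then $\alpha u + \beta v$ is a representative of $\alpha \tilde f + \beta \tilde g$ lying in $D({\cal D})$, and the definition directly yields $\tilde{\cal D}(\alpha \tilde f + \beta \tilde g) = \alpha \tilde{\cal D}\tilde f + \beta \tilde{\cal D}\tilde g$. So the real content is single-valuedness.

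The plan is to use a quotient-semigroup argument. Since $C_{0}(\R^+,\X)$ is a closed subspace of $BUC(\R^+,\X)$ invariant under the translation semigroup $(S(t))_{t \ge 0}$, the latter descends to a quotient semigroup $(\tilde S(t))_{t \ge 0}$ on $\Y$, which inherits strong continuity from $(S(t))_{t \ge 0}$ because the quotient projection $\pi \colon BUC(\R^+,\X) \to \Y$ is a contraction intertwining the two semigroups. Let $\tilde{\mathcal{A}}$ be the generator of $(\tilde S(t))_{t \ge 0}$; by general semigroup theory, $\tilde{\mathcal{A}}$ is a single-valued closed linear operator on $\Y$. Now for any $u \in \tilde f \cap D({\cal D})$, the difference quotient $t^{-1}(S(t)u - u)$ converges to ${\cal D}u$ in $BUC(\R^+,\X)$, so applying $\pi$ gives $t^{-1}(\tilde S(t)\tilde f - \tilde f) \to \widetilde{{\cal D}u}$ in $\Y$. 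This shows $\tilde f \in D(\tilde{\mathcal{A}})$ with $\tilde{\mathcal{A}}\tilde f = \widetilde{{\cal D}u}$, and since $\tilde{\mathcal{A}}\tilde f$ depends only on $\tilde f$, so does $\widetilde{{\cal D}u}$. Hence $\tilde{\cal D}$ is well defined and coincides with the restriction of $\tilde{\mathcal{A}}$ to $D(\tilde{\cal D})$.

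If one prefers a purely computational argument, single-valuedness can be reduced to the following assertion: if $v \in BUC(\R^+,\X) \cap C_{0}(\R^+,\X)$ and $v' \in BUC(\R^+,\X)$, then $v' \in C_{0}(\R^+,\X)$. This follows from the identity $v(t+h) - v(t) = \int_t^{t+h} v'(s)\,ds$ combined with the uniform continuity of $v'$: one obtains $\|v'(t)\| \le h^{-1}\|v(t+h) - v(t)\| + \omega(h)$, where $\omega$ is the modulus of continuity of $v'$, and sending $t \to \infty$ followed by $h \to 0^+$ forces $\lim_{t \to \infty}\|v'(t)\| = 0$. Applied to $v = u_1 - u_2$ for any two representatives of $\tilde f$ in $D({\cal D})$, this gives ${\cal D}u_1 - {\cal D}u_2 \in C_{0}(\R^+,\X)$ as required. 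The only subtle point in either approach is this passage from $v \in C_0$ to $v' \in C_0$, which fails without the $BUC$-regularity of $v'$; past that, everything is routine.
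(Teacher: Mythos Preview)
Your proof is correct. The paper itself gives no argument here---it simply cites \cite{luomin}---so there is nothing to compare against directly. Both of your approaches are valid and standard: the quotient-semigroup argument cleanly identifies $\tilde{\cal D}$ with (a restriction of) the generator of the induced $C_0$-semigroup $(\tilde S(t))_{t\ge 0}$ on $\Y$, from which single-valuedness and linearity are automatic; the computational argument correctly isolates the only nontrivial point, namely that $v\in C_0(\R^+,\X)$ together with $v'\in BUC(\R^+,\X)$ forces $v'\in C_0(\R^+,\X)$, and your estimate $\|v'(t)\| \le h^{-1}\|v(t+h)-v(t)\| + \omega(h)$ followed by $t\to\infty$, $h\to 0^+$ is exactly right. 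Either argument would be an acceptable replacement for the citation.
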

\begin{proof}
For the proof see \cite{luomin}.
\end{proof}

For each given $f\in BUC  (\R^+,\X)$ consider the following complex function $\hat f(\lambda )$ in $\lambda$ defined as
\begin{equation}
\hat f (\lambda ) := (\lambda -\tilde{\cal D})^{-1} \tilde{f} .
\end{equation}

\begin{definition}
The set of all points $\xi_0 \in \R$ such that $\hat f (\lambda )$ has no analytic extension to any neighborhood of $i\xi_0$ is defined to be the spectrum of $f$, denoted by $sp  (f)$.
\end{definition}

\section{Main results}
\subsection{Spectral characterization of asymptotic $1$-periodicity}
\begin{definition}
Let $p$ be a given real number in $[0,2\pi)$. A function $g\in BUC(\R^+,\X)$ is said to be an asymptotic Bloch $1$-periodic function of type $p$ if
\begin{equation*}
\lim_{t\to\infty} (g(t+1)-e^{ip}g(t))=0.
\end{equation*}
If $p=0$, an asymptotic Bloch $1$-periodic function $g$ of type $p$ will be called an asymptotic $1$-periodic function. When $p=\pi$ we call the function asymptotic anti $1$-periodic.
\end{definition}
Below we will show that the spectrum of a function $f\in BUC  (\R^+,\X)$ can be determined as the spectrum of a linear operator. Let us define ${\cal M}_f$ as the closure of the linear space spanned by the set $\{ \tilde S(t)f:\ t\in \R \} \subset \Y$. Obviously $\bar S(t)$ leaves ${\cal M}_f$ invariant for every $t\in \R$. We consider the $C_0$-group of isometries $\{ \tilde S|_{{\cal M}_f}, t\in \R \}$. The generator of this group will be denoted by $\tilde D|_{{\cal M}_f}$.

\begin{lemma}
For each $f\in BUC  (\R^+,\X)$, we have
\begin{equation}
i \cdot sp (f) =\sigma (\tilde {\cal D}|_{{\cal M}_f}) .
\end{equation}
\end{lemma}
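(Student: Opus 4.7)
The plan is to prove the two inclusions separately. First I would verify that $(\tilde S(t)|_{\mathcal M_f})_{t\in\R}$ really forms a $C_0$-group of isometries: for any representative $g\in BUC(\R^+,\X)$ one has $\|\tilde S(t)\tilde g\|_\Y=\limsup_{s\to\infty}\|g(s+t)\|=\|\tilde g\|_\Y$, so $\tilde S(t)$ is an isometry on $\Y$; on the closed $\tilde S$-invariant cyclic subspace $\mathcal M_f$ it becomes invertible (density of the orbit of $\tilde f$ forces surjectivity), hence the semigroup extends to a group whose generator is $\tilde{\mathcal D}|_{\mathcal M_f}$. Standard semigroup theory then yields $\sigma(\tilde{\mathcal D}|_{\mathcal M_f})\subset i\R$.

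\medskip

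For the inclusion $\sigma(\tilde{\mathcal D}|_{\mathcal M_f})\supset i\cdot sp(f)$, I would argue by contraposition. Suppose $i\xi_0\in\rho(\tilde{\mathcal D}|_{\mathcal M_f})$; then $\lambda\mapsto R(\lambda,\tilde{\mathcal D}|_{\mathcal M_f})\tilde f$ is analytic on a neighborhood of $i\xi_0$. Because $\mathcal M_f$ is invariant under $R(\lambda,\tilde{\mathcal D})$ for $\Re\lambda>0$ (inherited from $\tilde S$-invariance), this restricted resolvent coincides with $\hat f(\lambda)$ on their common domain and therefore furnishes an analytic extension of $\hat f$ across $i\xi_0$, so $\xi_0\notin sp(f)$.

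\medskip

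The reverse inclusion $i\cdot sp(f)\subset\sigma(\tilde{\mathcal D}|_{\mathcal M_f})$ is the substantive direction. Assume $\hat f$ extends analytically to a neighborhood $U$ of $i\xi_0$ via some $F:U\to\Y$. By closedness of $\mathcal M_f$ and of $\tilde{\mathcal D}|_{\mathcal M_f}$, analytic continuation forces $F(U)\subset\mathcal M_f$ and the functional identity $(\lambda-\tilde{\mathcal D}|_{\mathcal M_f})F(\lambda)=\tilde f$ throughout $U$. Commuting with the group then gives that $\tilde S(t)F(\lambda)$ is an analytic $\mathcal M_f$-valued extension of $R(\lambda,\tilde{\mathcal D}|_{\mathcal M_f})\tilde S(t)\tilde f$ to $U$, with the uniform bound $\|\tilde S(t)F(\lambda)\|=\|F(\lambda)\|$ for every $t\in\R$.

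\medskip

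The hard part is to upgrade these pointwise extensions along the orbit of $\tilde f$ to an operator-valued analytic extension of $R(\cdot,\tilde{\mathcal D}|_{\mathcal M_f})$ on all of $\mathcal M_f$. Naive linearity is insufficient, since the coefficients of a representation $\tilde g=\sum c_k\tilde S(t_k)\tilde f$ are not controlled by $\|\tilde g\|$. I would surmount this by approximating a general $\tilde g\in\mathcal M_f$ by convolution-type integrals $\int_\R\phi(t)\tilde S(t)\tilde f\,dt$ with $\phi\in L^1(\R)$, passing the resolvent under the integral (justified by the analytic extension and the uniform bound just obtained), and invoking the weak spectral mapping theorem for the isometric $C_0$-group $\tilde S|_{\mathcal M_f}$---precisely the tool signaled in the introduction. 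This identifies the Arveson/Beurling spectrum of the cyclic vector $\tilde f$, which coincides with $sp(f)$ by Laplace inversion, with $-i\cdot\sigma(\tilde{\mathcal D}|_{\mathcal M_f})$, giving $i\xi_0\in\rho(\tilde{\mathcal D}|_{\mathcal M_f})$ and closing the argument.
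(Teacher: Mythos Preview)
Your easy inclusion $\sigma(\tilde{\mathcal D}|_{\mathcal M_f})\subset i\cdot sp(f)$ agrees with the paper's, and your third paragraph correctly sets up the substantive direction and identifies the real obstacle: the analytic extension of $\hat f$ near $i\xi_0$ must be promoted from the single vector $\tilde f$ to every $w\in\mathcal M_f$, with a uniform bound.

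The gap is in your last paragraph. The Weak Spectral Mapping Theorem is not the tool that closes this step. The WSMT says $\sigma(\tilde S(1)|_{\mathcal M_f})=\overline{\exp\bigl(\sigma(\tilde{\mathcal D}|_{\mathcal M_f})\bigr)}$; it relates the spectrum of the time-one map to the spectrum of the generator, but it presupposes knowledge of $\sigma(\tilde{\mathcal D}|_{\mathcal M_f})$ rather than producing it from the local spectrum of a single vector. (In the paper the WSMT is invoked only \emph{after} this lemma, in Corollary~\ref{cor 1}, to pass from $sp(f)$ to the circular spectrum $\sigma(f)$.) Your convolution idea $\tilde g=\int_\R\phi(t)\tilde S(t)\tilde f\,dt$ does give an analytic extension of $\hat g$ with bound $\|\phi\|_{L^1}\sup_U\|F\|$, but as you yourself note this is not a bound in terms of $\|\tilde g\|$, so it does not yield a bounded inverse on $\mathcal M_f$; the WSMT does nothing to repair that.

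The paper handles this differently and more directly. Given the analytic extension of $\hat f$ on a connected neighborhood $U(\lambda_0)$, it invokes the argument from Batty--van~Neerven--R\"abiger \cite[Proof of Theorem~2.2]{batneerab2}: the maximum modulus principle together with Vitali's theorem on sequences of holomorphic functions shows that $\hat w(\lambda)$ extends analytically to $U(\lambda_0)$ for \emph{every} $w\in\mathcal M_f$, with a uniform bound. This gives surjectivity of $\lambda_0-\tilde{\mathcal D}|_{\mathcal M_f}$. Injectivity is then proved by an elementary computation you do not attempt: if $(\lambda_0-\tilde{\mathcal D})y_0=0$ with $y_0\in\mathcal M_f$, then for $\lambda\notin i\R$ one has $\hat y_0(\lambda)=y_0/(\lambda-\lambda_0)$, and since $\hat y_0$ must extend analytically across $\lambda_0$ by the previous step, $y_0=0$. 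Replace your final paragraph with these two ingredients (the Vitali/maximum-modulus propagation for existence, and the pole computation for uniqueness) and the proof goes through.
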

\begin{proof}
If $\lambda_0\in \rho (\tilde D|_{{\cal M}_f})$, then, for $\lambda\in U(\lambda_0)\backslash i\R$, where $U(\lambda_0)$ is a small neighborhood of $\lambda_0$
$$
\hat f (\lambda ) = (\lambda -\tilde{\cal D})^{-1} \tilde{f} = (\lambda -\tilde{\cal D}_{{\cal M}_f})^{-1} \tilde{f} .
$$
Therefore, $\hat f (\lambda ) $ has $ (\lambda -\tilde{\cal D}_{{\cal M}_f})^{-1} \tilde{f} $ as
an analytic extension to a neighborhood of $\lambda_0$. That means $\lambda_0\not\in i \cdot sp (f)$.

\medskip
Conversely, let $\lambda_0\in i\R \backslash i \cdot sp (f)$. Then, $\hat f (\lambda ) $ has an analytic extension to a connected neighborhood $U(\lambda_0)$ of $\lambda_0$. We will show that the equation
\begin{equation}\label{3.6}
\lambda_o y-\tilde {\cal D} y =w
\end{equation}
has a unique solution $y\in {\cal M}_f$ for each $w\in {\cal M}_f$. First, we show that Eq.(\ref{3.6}) has a solution. For $\lambda \in U(\lambda_0) \backslash i\R$ such that $\Re \lambda>0$, using the formula
$$
R(\lambda ,\tilde {\cal D})\tilde f =\int^\infty _0 e^{-\lambda t}\tilde S(t)\tilde fdt,
$$
we see that $\hat f(\lambda )=R(\lambda ,\tilde D)\tilde f  \in {\cal M}_f$. Subsequently, $\hat f(\lambda_0) \in  {\cal M}_f$. As the operator $\tilde D$ is a closed operator and $\hat f(\lambda)$ is analytic at $\lambda_0$, we see that
\begin{equation}
\lambda_0 \hat f(\lambda_0) -\tilde {\cal D}\hat f(\lambda _0) =w.
\end{equation}
Next, we will show that the solution of Eq.(\ref{3.6}) is unique for each given $w\in {\cal M}_f$. This is equivalent to show that the homogeneous equation has only trivial solution. Indeed, suppose $y_0\in {\cal M}_f$ is a solution of 
$$
\lambda_0 y_0-\tilde {\cal D} y_0 =0.
$$
Using the maximum modulus principle of holomorphic functions as in \cite[Proof of Theorem 2.2,p. 2074]{batneerab2} and then the Vitali's Theorem on convergence of sequences of holomorphic functions we can show that for each $w\in {\cal M}_f$, the function $\hat w(\lambda )$ has an analytic extension in the connected neighborhood $U(\lambda_0)$ of $\lambda _0$.
Next, for $\lambda \in U(\lambda_0)\backslash i\R$ by the identity
$$
R(\lambda ,\tilde {\cal D}) (\lambda -\tilde {\cal D})y_0 =y_0
$$
we have
$$
\lambda R(\lambda ,\tilde {\cal D})y_0 -y_0= R(\lambda ,\tilde {\cal D})\tilde {\cal D}y_0= R(\lambda ,\tilde {\cal D})\lambda_0 y_0 .
$$
Hence
\begin{equation}
\hat y_0 (\lambda) =R(\lambda ,\tilde {\cal D})y_0 =\frac{y_0}{\lambda -\lambda _0}.
\end{equation}
This function has an analytic extension to a neighborhood of $\lambda_0$ if and only if $y_0=0$. That is, $\lambda_0$ is in $\rho (\tilde {\cal D}|_{{\cal M}f})$.
This completes the proof of the lemma.
\end{proof}
Recall that in \cite{PAMS1} the "circular" spectrum $\sigma (f)$ of $f\in BUC  (\R^+,\X)$ is defined to be the set of $\lambda_0\in \Gamma$ such that the complex function $R(\lambda ,\tilde S(1))\tilde f$ has no analytic extension to any neighborhood of $\lambda_0$. In the same way as \cite[Lemma 2.7]{minPAMS2009} we can show that
\begin{lemma}
For $f\in BUC  (\R^+,\X)$, the following is valid
\begin{equation}
\sigma (f) =\sigma (\tilde S(1)|_{{\cal M}_f}) .
\end{equation}
\end{lemma}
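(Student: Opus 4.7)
The plan is to imitate the proof of the previous lemma, replacing the unbounded generator $\tilde{\mathcal D}$ by the bounded operator $\tilde S(1)$ and the imaginary axis by the unit circle $\Gamma$. The easy inclusion $\sigma(f) \subset \sigma(\tilde S(1)|_{\mathcal M_f})$ should be immediate: if $\lambda_0 \in \rho(\tilde S(1)|_{\mathcal M_f})$, then since $\tilde f \in \mathcal M_f$, for $\lambda$ in a small neighborhood $U(\lambda_0)$ intersected with $\rho(\tilde S(1))$ we have $R(\lambda,\tilde S(1))\tilde f = R(\lambda,\tilde S(1)|_{\mathcal M_f})\tilde f$; the right-hand side is analytic on all of $U(\lambda_0)$, furnishing the required analytic extension, so $\lambda_0\notin\sigma(f)$.

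For the reverse inclusion, I would start from $\lambda_0\in\Gamma\setminus\sigma(f)$, so that $F(\lambda):=R(\lambda,\tilde S(1))\tilde f$ extends analytically to a connected neighborhood $U(\lambda_0)$, and show $\lambda_0\in\rho(\tilde S(1)|_{\mathcal M_f})$, i.e.\ that $\lambda_0-\tilde S(1)$ is a bijection on $\mathcal M_f$. The intermediate step on which everything hinges is: for every $w\in\mathcal M_f$ the function $R(\lambda,\tilde S(1))w$ admits an analytic extension $\hat w$ to $U(\lambda_0)$ taking values in $\mathcal M_f$. On the dense linear span of $\{\tilde S(t)\tilde f:t\in\R\}$ this is clear, since $\tilde S(t)$ commutes with $\tilde S(1)$, giving $R(\lambda,\tilde S(1))\tilde S(t)\tilde f = \tilde S(t)F(\lambda)$, which is analytic on $U(\lambda_0)$. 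To pass to the closure I would combine the maximum modulus principle on a compact contour inside $U(\lambda_0)$ with Vitali's theorem on convergence of holomorphic sequences, following \cite[Proof of Theorem 2.2, p.~2074]{batneerab2} exactly as in the proof of the previous lemma.

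With the extension $\hat w$ in hand, both surjectivity and injectivity fall out. For surjectivity, the values $\hat w(\lambda)=R(\lambda,\tilde S(1))w$ lie in $\mathcal M_f$ for $\lambda\in U(\lambda_0)\setminus\Gamma$, so by closedness of $\mathcal M_f$ the limit $\hat w(\lambda_0)$ also lies there, and continuity of the bounded operator $\lambda_0-\tilde S(1)$ yields $(\lambda_0-\tilde S(1))\hat w(\lambda_0)=w$. For injectivity, if $y_0\in\mathcal M_f$ satisfies $(\lambda_0-\tilde S(1))y_0=0$, the identity $R(\lambda,\tilde S(1))(\lambda-\tilde S(1))y_0=y_0$ on $\lambda\notin\Gamma$ gives $R(\lambda,\tilde S(1))y_0 = y_0/(\lambda-\lambda_0)$; applying the extension result to $w=y_0\in\mathcal M_f$ forces the simple pole at $\lambda_0$ to vanish, hence $y_0=0$.

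The main obstacle I anticipate is precisely the density step: extending analyticity of $\lambda\mapsto R(\lambda,\tilde S(1))w$ from the translation span to all of $\mathcal M_f$ cannot be done by a naive uniform convergence argument, because $\|R(\lambda,\tilde S(1))\|$ blows up as $\lambda$ approaches $\Gamma$. Controlling the local analytic extensions of $R(\lambda,\tilde S(1))w_n$ uniformly in $n$ for an approximating sequence $w_n\to w$ requires the maximum modulus plus Vitali combination on a compact contour interior to $U(\lambda_0)$; this machinery is by now standard in this circle of ideas and transports from the generator case of the previous lemma without substantial change, so after this reduction the rest of the argument is a direct translation.
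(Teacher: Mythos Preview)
Your proposal is correct and follows essentially the same approach as the paper. The paper does not give an explicit proof but merely refers to \cite[Lemma 2.7]{minPAMS2009}, whose argument is precisely the adaptation you describe: transplant the local--spectral reasoning of the preceding lemma from the generator $\tilde{\mathcal D}$ and the imaginary axis to the bounded isometry $\tilde S(1)$ and the unit circle, using commutation with $\tilde S(t)$ on the dense translation span together with the maximum modulus\,/\,Vitali device to extend to all of $\mathcal M_f$.
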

\begin{corollary}\label{cor 1}
For $f\in BUC  (\R^+,\X)$ we have
\begin{equation}\label{spec}
\sigma (f) =\overline{e^{i \cdot {\rm sp}(f)}},
\end{equation}
where the overlining means the closure in the topology of the complex plane.
\end{corollary}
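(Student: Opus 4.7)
The plan is to assemble Corollary \ref{cor 1} directly from the two lemmas immediately preceding it, together with the Weak Spectral Mapping Theorem (WSMT) for $C_0$-groups of isometries. The first lemma identifies $i\cdot\mathrm{sp}(f)$ with the spectrum of the generator $\tilde{\mathcal{D}}|_{{\cal M}_f}$, and the second lemma identifies $\sigma(f)$ with the spectrum of the time-one map $\tilde S(1)|_{{\cal M}_f}$. Since $\tilde S(t)|_{{\cal M}_f}$ was defined to be a $C_0$-group of isometries on ${\cal M}_f$ with generator $\tilde{\mathcal{D}}|_{{\cal M}_f}$, the bridge between these two spectra is exactly the kind of statement that the WSMT provides.

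The central step is therefore to invoke the WSMT: for a $C_0$-group of isometries $(T(t))_{t\in\mathbb{R}}$ with generator $B$, one has
\begin{equation*}
\sigma(T(t)) = \overline{e^{t\sigma(B)}}, \qquad t\in\mathbb{R}.
\end{equation*}
Applied to $T(t)=\tilde S(t)|_{{\cal M}_f}$ and $B=\tilde{\mathcal{D}}|_{{\cal M}_f}$ at $t=1$, this yields
\begin{equation*}
\sigma\bigl(\tilde S(1)|_{{\cal M}_f}\bigr) = \overline{e^{\sigma(\tilde{\mathcal{D}}|_{{\cal M}_f})}}.
\end{equation*}
Combining this identity with the two lemmas then gives
\begin{equation*}
\sigma(f) = \sigma\bigl(\tilde S(1)|_{{\cal M}_f}\bigr) = \overline{e^{\sigma(\tilde{\mathcal{D}}|_{{\cal M}_f})}} = \overline{e^{i\cdot\mathrm{sp}(f)}},
\end{equation*}
which is precisely the desired equality \eqref{spec}.

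The only real verification needed is that the hypotheses of the WSMT are met, namely that the restricted translation group on ${\cal M}_f$ is indeed a $C_0$-group of isometries. Strong continuity is inherited from the full translation semigroup (the first lemma of the subsection), and the isometry property holds because translation preserves the sup-norm on $BUC(\mathbb{R}^+,\mathbb{X})$, and the quotient norm on $\mathbb{Y}$ is preserved by $\tilde S(t)$ both for $t\ge 0$ and for $t<0$ on the closed shift-invariant subspace ${\cal M}_f$ (this is the standard reason isometries appear in this construction and is already used in the paragraph immediately preceding Lemma 1.4). Once this point is noted, one can quote the WSMT in the form found in standard semigroup references (e.g.\ Engel--Nagel or Arveson), and the corollary follows in one line.

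I expect the main — and essentially only — obstacle to be the invocation of the WSMT itself, since it is a nontrivial theorem, but it is standard for isometry groups and is exactly what the authors have been setting up by restricting to ${\cal M}_f$; everything else in the proof is bookkeeping.
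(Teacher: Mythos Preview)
Your proposal is correct and matches the paper's proof essentially verbatim: the authors simply note that $(\tilde S(t))_{t\in\R}$ is a $C_0$-group of isometries on ${\cal M}_f$ and invoke the Weak Spectral Mapping Theorem (citing Engel--Nagel) to obtain \eqref{spec} from the two preceding lemmas. Your write-up is in fact more detailed than theirs; the only minor imprecision is that translation on $BUC(\R^+,\X)$ is a contraction rather than an isometry---the isometry property emerges only after passing to the quotient $\Y$---but this does not affect the argument.
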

\begin{proof}
Since $\tilde S(t),t\in \R$ is a $C_0$-group of isometries, by the Weak Spectral Mapping Theorem (see e.g. \cite[Theorem 3.16, p. 283]{engnag}) the identity (\ref{spec}) is valid.
\end{proof}

In \cite{luoloiminmat} it is proved that a function $g\in BUC(\R^+,\X)$ is an asymptotic Bloch $1$-periodic function of type $p$ if and only if $\sigma (g)\subset \{ e^{ip}\}$. In the following another characterization is given in term of spectrum $sp(f)$ as in many circumstances it is easier to estimate this spectrum than $\sigma (f)$.
\begin{theorem}\label{the 3.8}
Let $g\in BUC  (\R^+,\X)$. Then,
\begin{enumerate}
\item
If $\xi_0$ is an isolated point in $sp (g)$, then $i\xi_0$ is either removable or a pole of ${\hat g(\lambda)}$ of order less than $1$;
\item If $sp  (g)=\emptyset$, then $g\in C_{0 }(\R^+,\X)$;
\item $sp  (g)$ is a closed subset of $\R$;
\item $g$ is anasymptotic Bloch $1$-periodic function of type $p$ if and only if $sp (g) \subset  p+2\pi \Z$.
\end{enumerate}
\end{theorem}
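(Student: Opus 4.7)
My plan is to derive the four assertions from the preceding Lemma, which gives $i\cdot sp(g) = \sigma(\tilde{\cal D}|_{{\cal M}_g})$, together with the fact that $(\tilde S(t)|_{{\cal M}_g})_{t\in\R}$ is a $C_0$-group of isometries on the Banach space ${\cal M}_g$. Item (iii) is immediate from the definition: if $\hat g$ extends analytically to an open neighbourhood $U$ of $i\xi_0$, the same $U$ witnesses analyticity at every nearby $i\xi$, so $\R\setminus sp(g)$ is open. For (ii), the Lemma rewrites $sp(g)=\emptyset$ as $\sigma(\tilde{\cal D}|_{{\cal M}_g}) = \emptyset$; since the generator of a nontrivial bounded $C_0$-group on a Banach space has nonempty spectrum (a Liouville argument applied to the entire bounded resolvent), we must have ${\cal M}_g = \{0\}$, and because $\tilde g\in {\cal M}_g$ this forces $\tilde g = 0$ in $\Y$, i.e., $g \in C_0(\R^+,\X)$.

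For (iv), I would combine the result quoted from \cite{luoloiminmat} — that $g$ is asymptotic Bloch $1$-periodic of type $p$ iff $\sigma(g) \subset \{e^{ip}\}$ — with Corollary \ref{cor 1}, which gives $\sigma(g) = \overline{e^{i\cdot sp(g)}}$. Since $\{e^{ip}\}$ is already closed in the unit circle, and the preimage of $\{e^{ip}\}$ under $\xi\mapsto e^{i\xi}$ is exactly $p+2\pi\Z$, the inclusion $\overline{e^{i\cdot sp(g)}} \subset \{e^{ip}\}$ is equivalent to $sp(g)\subset p+2\pi\Z$, so both implications drop out at once.

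Item (i) is the subtlest point. Assume $\xi_0$ is isolated in $sp(g)$; by the Lemma, $i\xi_0$ is isolated in $\sigma(\tilde{\cal D}|_{{\cal M}_g})$, so the Riesz spectral projection $P$ at $i\xi_0$ is well defined and $R(\lambda, \tilde{\cal D}|_{{\cal M}_g})$ admits a Laurent expansion around $i\xi_0$. The key semigroup-theoretic fact I would invoke is that, for a bounded $C_0$-group, every isolated spectral point of the generator is necessarily a simple pole of the resolvent: a higher-order pole would correspond to a nonzero nilpotent part of $\tilde{\cal D}|_{{\cal M}_g} - i\xi_0$ on $\mathrm{Range}(P)$, which would produce polynomial-in-$t$ growth of $\tilde S(t)P$, contradicting $\|\tilde S(t)|_{{\cal M}_g}\|\equiv 1$. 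Applying this expansion to $\tilde g$ then shows that $\hat g(\lambda)$ has at worst a first-order pole at $i\xi_0$ (and is removable precisely when $P\tilde g = 0$). The main obstacle is exactly this bounded-group statement about simple poles at isolated spectral points; everything else reduces to topology plus the Lemma.
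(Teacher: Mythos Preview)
Your treatment of (iv) is exactly the paper's: both combine the characterization from \cite{luoloiminmat} with Corollary~\ref{cor 1}. For (i)--(iii), however, the paper gives no argument and simply refers to \cite{luomin}, whereas you supply self-contained proofs that route everything through the identification $i\cdot sp(g)=\sigma(\tilde{\cal D}|_{{\cal M}_g})$ established in the preceding Lemma together with standard facts about bounded $C_0$-groups (Liouville for (ii), a Hille--Yosida resolvent bound for (i)). This is a genuinely different and arguably cleaner path, since it unifies all four items under the spectral-mapping viewpoint developed in \emph{this} paper rather than importing (i)--(iii) as external results. One small sharpening of your sketch for (i): the most direct way to rule out a singularity of order $\ge 2$ at $i\xi_0$ is the group resolvent estimate $\|R(\lambda,\tilde{\cal D}|_{{\cal M}_g})\|\le M/|\Re\lambda|$, which immediately bounds the blow-up along $\lambda=i\xi_0+r$, $r\to 0$, by $M/|r|$ and hence excludes both higher-order poles and essential singularities; your polynomial-growth argument is correct but, as stated, tacitly assumes the singularity is already a finite-order pole.
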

\begin{proof}
For the proofs of (i), (ii) and (iii) see \cite{luomin}. For (iv) note that as shown in \cite{luoloiminmat}, 
$f$ is an asymptotic Bloch $1$-periodic function of type $p$ if and only if $\sigma (f) \subset \{ e^{ip}\}$. By Collorary \ref{cor 1}, this is equivalent to $sp (f) \subset p+2\pi \Z$.
\end{proof}

%%%%%%%%%%%%%%%%%%%%%%%%%%%%%%%%%%%%%%%%%%%%%%%%%%%%%%%%%%%%%%%%%%%

\subsection{Applications to the asymptotic behavior of solutions of Eq(\ref{eq:0})}
We are going to apply the spectral theory of bounded functions in the previous section to study the asymptotic behavior of mild solutions of Eq.(\ref{fde}) with $0<\alpha \le 1$.

\begin{definition}
 A function $u\in BUC(\R^+,\X)$ is said to be an asymptotic mild solution of Eq.(\ref{fde}) if there exists a function $\epsilon (\cdot )\in C_0(\R^+,\X)$ such that $u$ is a mild solution of the equation
\begin{equation}\label{aeq}
D^{\alpha}_Cu(t)=Au(t)+f(t)+\epsilon(t), t\geq 0.
\end{equation}
\end{definition}
Below we will denote by
$\rho (A,\alpha)$
the set of all $\xi_0\in \C$ such that $(\xi_0^\alpha -A)$ has an inverse $(\xi_0^\alpha -A)^{-1}$ that is analytic in a neighborhood of $\xi_0$, and by $\Sigma_i (A,\alpha ):=i\R  \backslash \rho (A,\alpha)$. 

\medskip
We will use a new notation
\begin{equation}
R_\alpha (\lambda , A):= \lambda ^{\alpha-1} ( \lambda^{\alpha} -A)^{-1} .
\end{equation}
The lemma below is actually stated in \cite[Lemma 2.2]{arebat1}. Below is an adaptation made in \cite[Lemma 4.1]{luomin}
\begin{lemma}\label{lem 4.1}
Let $u\in BUC  (\R^+,\X)$, $\xi_0\in\R$, and let the function $G(\lambda )$ (in $\lambda$) be an analytic extension of the function $\hat u(\lambda) =(\lambda -\tilde{\cal D})^{-1}\tilde u$ with $\Re\lambda >0$ on the open disk $B( i\xi_0,r)$ with some positive $r$. Then, $G(\lambda ) =\hat u (\lambda) $ for $\Re \lambda <0$ on a disk $B(i\xi_0,r)$.
\end{lemma}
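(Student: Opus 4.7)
The plan is to exploit the fact that $\tilde{\cal D}$ generates a $C_0$-group of isometries on $\Y$ (and in particular on the cyclic subspace ${\cal M}_u$, which is all that is really needed here), so that $\sigma(\tilde{\cal D})\subset i\R$ and the resolvent $(\lambda-\tilde{\cal D})^{-1}$ is defined and analytic on both connected components of $\C\setminus i\R$. The goal is then to use closedness of $\tilde{\cal D}$ to propagate the identity $(\lambda-\tilde{\cal D})G(\lambda)=\tilde u$ from the right half of the disk $B(i\xi_0,r)$, where it is built in by hypothesis, to the whole disk, and then invert in the left half plane.

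Concretely, I would proceed in three steps. First, I would record that $\tilde S(t)$ extends to a $C_0$-group of isometries on $\Y$; the cleanest observation is that the quotient norm on $BUC(\R^+,\X)/C_0(\R^+,\X)$ equals $\limsup_{s\to\infty}\|f(s)\|$ and is therefore shift invariant, which both makes sense of $\tilde S(-t)$ in the quotient and shows it is isometric. Consequently $(\lambda-\tilde{\cal D})^{-1}$ exists for every $\lambda$ with $\Re\lambda\ne 0$. Second, I would fix any $\lambda\in B(i\xi_0,r)$ and pick a sequence $\lambda_n\to\lambda$ with $\Re\lambda_n>0$; continuity of $G$ gives $G(\lambda_n)\to G(\lambda)$, the right-half-plane identity $G(\lambda_n)=(\lambda_n-\tilde{\cal D})^{-1}\tilde u$ gives $\tilde{\cal D}G(\lambda_n)=\lambda_n G(\lambda_n)-\tilde u\to \lambda G(\lambda)-\tilde u$, and closedness of $\tilde{\cal D}$ then forces $G(\lambda)\in D(\tilde{\cal D})$ with $(\lambda-\tilde{\cal D})G(\lambda)=\tilde u$ throughout $B(i\xi_0,r)$. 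Third, for $\lambda$ in the disk with $\Re\lambda<0$, apply the now-available resolvent $(\lambda-\tilde{\cal D})^{-1}$ to both sides to conclude $G(\lambda)=(\lambda-\tilde{\cal D})^{-1}\tilde u=\hat u(\lambda)$.

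The main obstacle is the structural input that $\tilde{\cal D}$ generates a $C_0$-group of isometries, so that its resolvent set actually reaches into the left half plane; without this, there is no intrinsic meaning for $\hat u(\lambda)$ when $\Re\lambda<0$ and the inversion in the third step is vacuous. Once that fact is in hand, the closedness-plus-approximation argument is routine, but one must be careful to choose the approximating sequence $\lambda_n$ inside the right half plane, where both $G(\lambda_n)\in D(\tilde{\cal D})$ and the explicit formula $\tilde{\cal D}G(\lambda_n)=\lambda_n G(\lambda_n)-\tilde u$ are a priori available from the hypothesis, rather than arbitrarily in the disk where a priori we only know continuity of $G$.
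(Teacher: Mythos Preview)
The paper does not prove this lemma itself; it simply cites \cite[Lemma 2.2]{arebat1} and the adaptation in \cite[Lemma 4.1]{luomin}. Your overall strategy is the right one and matches the standard argument behind those references: observe that $\tilde S(t)$ extends to a $C_0$-group of isometries on $\Y$ (so $\sigma(\tilde{\cal D})\subset i\R$ and $\hat u(\lambda)=(\lambda-\tilde{\cal D})^{-1}\tilde u$ has intrinsic meaning also for $\Re\lambda<0$), then propagate the identity $(\lambda-\tilde{\cal D})G(\lambda)=\tilde u$ across the whole disk and invert on the left. Your Step~1 and Step~3 are fine.

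There is, however, a genuine gap in Step~2. You write: ``fix any $\lambda\in B(i\xi_0,r)$ and pick a sequence $\lambda_n\to\lambda$ with $\Re\lambda_n>0$.'' If $\Re\lambda<0$ this is simply impossible, since $\Re\lambda_n\to\Re\lambda<0$. The closedness-plus-sequences argument you give therefore only transports the identity $(\lambda-\tilde{\cal D})G(\lambda)=\tilde u$ from the open right half of the disk to its closure within the disk, i.e.\ to the imaginary segment; it never reaches a single interior point of the left half. What is missing is an \emph{analyticity} argument rather than a closedness one: the map
\[
\lambda \longmapsto \bigl(G(\lambda),\ \lambda G(\lambda)-\tilde u\bigr)
\]
is holomorphic from the connected disk $B(i\xi_0,r)$ into $\Y\times\Y$ and, on the nonempty open right half, takes values in the closed subspace given by the graph of $\tilde{\cal D}$. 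By Hahn--Banach together with the scalar identity theorem, a holomorphic function on a connected domain that lands in a closed subspace on a nonempty open subset lands there everywhere. Hence $G(\lambda)\in D(\tilde{\cal D})$ and $(\lambda-\tilde{\cal D})G(\lambda)=\tilde u$ for \emph{all} $\lambda\in B(i\xi_0,r)$, after which your Step~3 goes through verbatim.
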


\begin{corollary}\label{cor 4.2}
Let $u\in BUC   (\R^+,\X)$ be an asymptotic mild solution of Eq.(\ref{fde}), where $f\in BUC(\R^+,\X)$. Then,
\begin{equation}
i \cdot sp   (u) \subset \Sigma_i (A,\alpha )\cup i\cdot sp(f) .
\end{equation}
\end{corollary}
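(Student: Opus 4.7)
The plan is to prove the contrapositive. Suppose $i\xi_0 \in \rho(A,\alpha)$ and $\xi_0 \notin sp(u)$ is what we must establish; more precisely, assume $i\xi_0 \in \rho(A,\alpha)$ and $\xi_0 \notin sp(f)$, and we will show $\xi_0 \notin sp(u)$, i.e.\ that $\hat u(\lambda) = (\lambda - \tilde{\mathcal D})^{-1}\tilde u$ admits an analytic extension to some open neighborhood of $i\xi_0$. By Lemma~\ref{lem 4.1}, it is enough to extend the right-half-plane branch of $\hat u(\lambda)$ (given for $\Re\lambda>0$ by $\int_0^\infty e^{-\lambda t}\tilde S(t)\tilde u\,dt$) across $i\xi_0$; the left-half-plane values will coincide automatically on the resulting disk.

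The heart of the argument is to derive an identity in $\mathbb Y$ expressing $\hat u(\lambda)$ in terms of $\hat f(\lambda)$, the resolvents $(\lambda^\alpha - A)^{-1}$ and $R_\alpha(\lambda,A)$, and terms analytic on $\rho(A,\alpha)$. To obtain it, I would start from the mild-solution equation
\[
u(t) = x + AJ^\alpha u(t) + J^\alpha(f+\epsilon)(t),
\]
and take the Laplace transform pointwise in the shift variable: for $s \ge 0$ and $\Re\lambda > 0$ set $U_\lambda(s) := \int_0^\infty e^{-\lambda t}u(s+t)\,dt$, and analogously $F_\lambda,E_\lambda$ for $f,\epsilon$. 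Splitting each convolution $J^\alpha g(s+t)$ into a ``memory'' piece over $[0,s]$ and a ``shifted'' piece over $[s,s+t]$ and using $\mathcal{L} g_\alpha(\lambda) = \lambda^{-\alpha}$ (with closedness of $A$ permitting its interchange with the Laplace integral), the shifted piece yields a factor $\lambda^{-\alpha}\bigl(AU_\lambda(s) + F_\lambda(s) + E_\lambda(s)\bigr)$. Collecting terms and solving with $(\lambda^\alpha-A)^{-1}$ produces, for $\Re\lambda > 0$ with $\lambda^\alpha \in \rho(A)$, a formula of the schematic form
\[
\hat u(\lambda) = (\lambda^\alpha - A)^{-1}\hat f(\lambda) + \Psi(\lambda)
\]
in $\mathbb Y$, where the contribution of $E_\lambda$ disappears because $\epsilon \in C_0(\mathbb R^+,\mathbb X)$, and $\lambda\mapsto\Psi(\lambda) \in \mathbb Y$ is analytic throughout $\rho(A,\alpha)$. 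For $\alpha=1$ the memory piece collapses algebraically (since $g_1\equiv 1$) and one finds $\Psi(\lambda)=R_1(\lambda,A)\tilde u$; for $0<\alpha<1$ the decay $g_\alpha(r)\to 0$ as $r\to\infty$ together with $u\in BUC(\mathbb R^+,\mathbb X)$ lets the memory contribution be reorganized as an analytic-in-$\lambda$ element of $\mathbb Y$.

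Once the displayed identity is in place, both summands are analytic in $\lambda$ on an open neighborhood $U$ of $i\xi_0$: $(\lambda^\alpha-A)^{-1}$ and $R_\alpha(\lambda,A)=\lambda^{\alpha-1}(\lambda^\alpha-A)^{-1}$ are holomorphic there because $i\xi_0 \in \rho(A,\alpha)$, while $\hat f(\lambda)$ extends analytically across $i\xi_0$ because $\xi_0\notin sp(f)$. Hence the right-half-plane branch of $\hat u(\lambda)$ extends analytically across $i\xi_0$, and Lemma~\ref{lem 4.1} upgrades this to a full analytic extension on a disk about $i\xi_0$. Therefore $\xi_0\notin sp(u)$, which is the contrapositive of the claim.

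The main obstacle is the derivation of the $\mathbb Y$-identity in the fractional case $0<\alpha<1$, which lacks the shift-semigroup structure present at $\alpha=1$: the memory term has to be shown to either vanish modulo $C_0(\mathbb R^+,\mathbb X)$ or to reorganize into a factor analytic in $\lambda$ on $\rho(A,\alpha)$. The techniques for this are the Caputo-setting analogues of those underlying \cite[Lemma~4.1]{luomin} and \cite[Lemma~2.2]{arebat1}; the interchange of the closed unbounded operator $A$ with the Laplace integrals is a minor additional technicality handled by the standard closedness argument for Bochner integrals.
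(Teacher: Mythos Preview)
Your outline is correct and matches the approach that the paper invokes: the paper's own ``proof'' is simply the citation ``See \cite[Lemma 4.2]{luomin}'', and what you have written is precisely a sketch of the argument carried out there --- derive a resolvent identity of the form $\hat u(\lambda) = (\lambda^\alpha-A)^{-1}\hat f(\lambda)+\Psi(\lambda)$ in $\mathbb Y$ for $\Re\lambda>0$, observe that both factors extend analytically across $i\xi_0$ under the hypotheses $i\xi_0\in\rho(A,\alpha)$ and $\xi_0\notin sp(f)$, and then invoke Lemma~\ref{lem 4.1} to close the disk. Your honest flagging of the memory-term manipulation as the main technical point is apt; that is exactly the computation absorbed by the reference to \cite[Lemma 4.2]{luomin}, so your proposal and the paper's proof coincide.
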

\begin{proof}
See \cite[Lemma 4.2]{luomin}. 
\end{proof}
The main results of this paper are the following:
\begin{theorem}\label{the main}
Assume that $\Sigma_i(A,\alpha)\subset 2i\pi \Z$ ($\Sigma_i(A,\alpha)\subset (2\Z+1)i\pi$, respectively) and $sp(f) \subset 2\pi \Z$ ($sp(f) \subset (2\Z+1)\pi$, respectively).
Then, every asymptotic mild solution of Eq.(\ref{fde}) is an asymptotic $1$ -periodic solution (an asymptotic anti $1$ -periodic solution, respectively). 
\end{theorem}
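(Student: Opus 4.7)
The plan is to chain together two results already set up earlier in the paper: the spectral inclusion for asymptotic mild solutions provided by Corollary \ref{cor 4.2}, and the spectral characterization of asymptotic Bloch $1$-periodicity given in Theorem \ref{the 3.8}(iv). Together they reduce the statement to a one-line set inclusion.

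First, I would fix an arbitrary asymptotic mild solution $u\in BUC(\R^+,\X)$ of Eq.(\ref{fde}). Corollary \ref{cor 4.2}, applied to $u$, yields
\begin{equation*}
i\cdot sp(u)\subset \Sigma_i(A,\alpha)\cup i\cdot sp(f).
\end{equation*}
Under the asymptotic $1$-periodic hypotheses $\Sigma_i(A,\alpha)\subset 2i\pi\Z$ and $sp(f)\subset 2\pi\Z$, both sets on the right lie in $2i\pi\Z$, so
\begin{equation*}
i\cdot sp(u)\subset 2i\pi\Z,\qquad\text{i.e.,}\qquad sp(u)\subset 2\pi\Z.
\end{equation*}
Theorem \ref{the 3.8}(iv) with $p=0$ then identifies this with the assertion that $u$ is asymptotic $1$-periodic.

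For the anti-periodic case I would repeat the same two-step argument verbatim, simply substituting $(2\Z+1)i\pi$ for $2i\pi\Z$ and $(2\Z+1)\pi$ for $2\pi\Z$. The same chain gives $sp(u)\subset (2\Z+1)\pi = \pi+2\pi\Z$, and Theorem \ref{the 3.8}(iv) applied with $p=\pi$ (i.e., $e^{ip}=-1$) concludes that $u$ is asymptotic anti $1$-periodic.

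Honestly, there is no real obstacle to overcome here; the entire theorem is a packaging of Corollary \ref{cor 4.2} with the spectral characterization in Theorem \ref{the 3.8}(iv). The substantive analytic work sits in those two results. The only thing worth double-checking is the routine arithmetic translation between the spectral sets on the real line and their multiples on $i\R$, and the observation that the hypotheses have been stated precisely so that the union appearing in Corollary \ref{cor 4.2} lies inside the single arithmetic progression needed by Theorem \ref{the 3.8}(iv).
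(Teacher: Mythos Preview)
Your proposal is correct and follows essentially the same route as the paper: apply Corollary~\ref{cor 4.2} to obtain $i\cdot sp(u)\subset \Sigma_i(A,\alpha)\cup i\cdot sp(f)$, deduce $sp(u)\subset 2\pi\Z$ (respectively $\pi+2\pi\Z$) from the hypotheses, and conclude via Theorem~\ref{the 3.8}(iv). The paper's own proof is the same two-step chain, with the anti-periodic case dismissed by the phrase ``treated in the same manner.''
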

\begin{proof}
By Corollary \ref{cor 4.2} for every asymptotic mild solution $u\in BUC(\R^+,\R)$ has the property that
$$
i \cdot sp(u) \subset \Sigma_i (A,\alpha )\cup i\cdot sp(f).
$$
Therefore, $sp(u) \subset 2\pi \Z$ if both $\Sigma_i (A,\alpha )\cup i\cdot sp(f)$ are parts of $2i\pi \Z$ , so by Theorem \ref{the 3.8}, $u$ is asymptotic $1$-periodic. The case of asymptotic anti $1$-periodicity is treated in the same manner.
\end{proof}

\begin{lemma}
For $\alpha \in [0,1)$ the following assertions are valid
\begin{enumerate}
\item The operator $J^\alpha $ maps $C_0(\R^+,\X) \to C_0(\R^+,\X)$, so it induces a linear bounded operator $\tilde J^\alpha$ from $\Y \to \Y$.
\item 
Let $u\in BUC(\R^+,\X)$ be an asymptotic mild solution of Eq.(\ref{fde}) and $A$ be bounded linear operator. Then,
\begin{equation}\label{3.13.1}
\tilde S\tilde u =\tilde{A} \tilde{J^\alpha} \tilde S\tilde u+\tilde S\tilde J^\alpha \tilde f.
\end{equation}
\end{enumerate}
\end{lemma}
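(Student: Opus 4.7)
The plan is to handle (i) by a direct split-of-the-convolution estimate, and then in (ii) to apply the translation $S$ to the mild-solution identity and pass to the quotient, invoking (i) to discard error terms lying in $C_0(\R^+,\X)$.

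For (i), fix $u \in C_0(\R^+,\X)$. Since $\alpha \in [0,1)$, the kernel $g_\alpha(r) = r^{\alpha-1}/\Gamma(\alpha)$ is monotone decreasing on $(0,\infty)$ with $g_\alpha(r) \to 0$ as $r \to \infty$. Given $\delta > 0$, I would first pick $T$ so that $\sup_{s \geq T}\|u(s)\| < \delta$ and split
$$
(J^\alpha u)(t) = \int_0^T g_\alpha(t-s)u(s)\,ds + \int_T^t g_\alpha(t-s)u(s)\,ds
$$
for $t > T$. The first piece is bounded by $T\|u\|_\infty g_\alpha(t-T)$, which vanishes as $t \to \infty$. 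For the tail, I would substitute $\tau = t-s$ and exploit that the bulk of the mass of $g_\alpha$ sits near $\tau = 0$, where, by our choice of $T$, $u(t-\tau)$ is also small once $t$ is sufficiently large. Once the $C_0$-mapping property is established, it descends to a well-defined bounded operator $\tilde J^\alpha$ on $\Y$ because $J^\alpha$ sends the null class into itself.

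For (ii), use the definition of asymptotic mild solution to produce $\epsilon \in C_0(\R^+,\X)$ with
$$
u(t) = A J^\alpha u(t) + J^\alpha(f+\epsilon)(t) + u(0).
$$
Apply $S$ (translation by $1$) to both sides. Boundedness of $A$ makes it commute with all scalar operations in $t$, so $S(AJ^\alpha u) = A\,S(J^\alpha u)$. The central commutation identity is
$$
(S J^\alpha v)(t) = (J^\alpha S v)(t) + \int_0^1 g_\alpha(t+1-s)\,v(s)\,ds,
$$
obtained by splitting the integral defining $(J^\alpha v)(t+1)$ at $s = 1$ and translating the variable of integration; the boundary term lies in $C_0(\R^+,\X)$ because $g_\alpha(t+1-s) \to 0$ uniformly on $s\in[0,1]$ as $t \to \infty$. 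Passing to the quotient, part (i) annihilates $\tilde J^\alpha \tilde\epsilon$ and the boundary commutator, while the constant contribution from $u(0)$ is translation-invariant; combining these with the commutation identity then yields (\ref{3.13.1}).

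The main obstacle is the delicate estimate in (i): because $g_\alpha$ is not integrable at infinity for $\alpha \in (0,1)$, the naive bound $\int_T^t g_\alpha(t-s)\,ds = (t-T)^\alpha/\Gamma(\alpha+1)$ grows with $t$, so crudely combining it with $\sup_{s \geq T}\|u(s)\| < \delta$ does not directly give a bound of order $\delta$. A refined split with a second, moving threshold (isolating $\tau \in [0, M]$, where $g_\alpha$ is integrable, from $\tau \in [M, t-T]$, where $g_\alpha(\tau) \leq g_\alpha(M)$ is bounded and $u(t-\tau)$ is controlled by $\delta$) will be required; this is the technical heart of the proof, after which (ii) is essentially a routine translation-and-quotient computation.
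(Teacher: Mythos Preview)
Your treatment of (ii) is the same route the paper takes: translate the mild-solution identity by $1$, split each convolution at $s=1$, recognise the boundary pieces $\int_0^1 g_\alpha(t+1-s)\,(\cdot)\,ds$ and $J^\alpha S\epsilon$ as $C_0$-terms, and pass to the quotient. (One caveat: the constant $u(0)$ is not in $C_0$, so ``translation-invariant'' does not by itself make it disappear in $\Y$; both you and the paper silently drop this term.)

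The real problem is (i). Your instinct that the tail estimate is delicate is correct, but the refined three-piece split you propose still fails: on the middle range $\tau\in[M,t-T]$ you bound the integrand by $g_\alpha(M)\cdot\delta$, yet that interval has length $t-T-M$, so the contribution is of order $g_\alpha(M)\,\delta\,t\to\infty$. No rearrangement of thresholds can rescue this, because the assertion is in fact false for $0<\alpha<1$. Take $u(t)=(1+t)^{-\beta}$ with $0<\beta<\alpha$; then $u\in C_0(\R^+,\R)$, but substituting $s=t\sigma$ and using the Beta integral gives
\[
(J^\alpha u)(t)=\frac{1}{\Gamma(\alpha)}\int_0^t(t-s)^{\alpha-1}(1+s)^{-\beta}\,ds\ \sim\ \frac{\Gamma(1-\beta)}{\Gamma(1+\alpha-\beta)}\,t^{\alpha-\beta}\longrightarrow\infty,
\]
so $J^\alpha u\notin C_0$ and is not even bounded. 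Since also $J^\alpha 1=t^\alpha/\Gamma(\alpha+1)$, the operator $J^\alpha$ does not carry $BUC$ into $BUC$ either, and $\tilde J^\alpha$ is not a well-defined bounded operator on all of $\Y$. The paper records (i) as ``obvious''; it is not, and the identity in (ii) is only formal unless one adds the hypothesis that $J^\alpha u$ and $J^\alpha f$ lie in $BUC(\R^+,\X)$.
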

\begin{proof}
Part (i): The proof is obvious.\\
Part (ii):
\begin{align*}
\left[Su\right] (t) & = u (t+1)\\
&=A \int^{t+1}_0 g_\alpha (t+1-\tau )u(\tau )d\tau    +\int^{t+1}_0 g_\alpha (t+1-\tau )[f(\tau )+\epsilon (\tau)] d\tau     \\
&= A\left[ \int^{t+1}_1 g_\alpha (t+1-\tau )u(\tau )d\tau + \int^{1}_0 g_\alpha (t+1-\tau )u(\tau )d\tau \right] \\
&\hspace{0.51cm} + \int^{t+1}_1 g_\alpha (t+1-\tau )f(\tau )d\tau + \int^{1}_0 g_\alpha (t+1-\tau )[f(\tau )+\epsilon(\tau)]d\tau  \\
&= A\left[  \left[ J^\alpha Su\right] (t) +\int^1_0 \frac{(t+1-\tau)^{\alpha -1}}{\Gamma (\alpha)}u(\tau )d\tau \right]\\
&\hspace{1cm} +  J^\alpha S[f+\epsilon]  (t) +\int^1_0 \frac{(t+1-\tau)^{\alpha -1}}{\Gamma (\alpha)}[f(\tau )+\epsilon (\tau)]d\tau  \\
\end{align*}
Since $A$ is bounded and 
\begin{align*}
&\lim_{t\to\infty} \int^1_0 \frac{(t+1-\tau)^{\alpha -1}}{\Gamma (\alpha)}[f(\tau )+\epsilon (\tau)]d\tau   =0,\\
&\lim_{t\to\infty}   J^\alpha S\epsilon (t) =0.
\end{align*}
(\ref{3.13.1}) follows.
\end{proof}

\begin{lemma}\label{lem.3.12}
Let $u\in BUC(\R^+,\X)$ and  $A$ be bounded linear operator. Assume further that $u$ is an asymptotic mild solution of Eq.(\ref{fde}) if and only if
\begin{equation}\label{3.13}
 \tilde u =\tilde{A} \tilde{J^\alpha}  \tilde u+ \tilde J^\alpha  \tilde f.
\end{equation}
\end{lemma}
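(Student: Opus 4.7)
The plan is to lift the pointwise mild-solution identity to the Banach quotient $\Y = BUC(\R^+,\X)/C_0(\R^+,\X)$, using that a bounded $A$ induces a bounded $\tilde A$ on $\Y$ and that $J^\alpha$ descends to a bounded operator $\tilde J^\alpha$ on $\Y$ by part (i) of the preceding lemma.

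For the forward direction I would start from the defining identity of an asymptotic mild solution: there exists $\epsilon \in C_0(\R^+,\X)$ with
\[u(t) = AJ^\alpha u(t) + J^\alpha f(t) + J^\alpha \epsilon(t) + u(0), \qquad t \ge 0.\]
Applying the quotient map $v \mapsto \tilde v$, the term $\widetilde{J^\alpha \epsilon}$ vanishes because $J^\alpha \epsilon \in C_0(\R^+,\X)$ by part (i), and the constant contribution $u(0)$ is absorbed by exactly the estimate $\lim_{t\to\infty}\int_0^1 g_\alpha(t+1-\tau)[\cdot]\,d\tau = 0$ used in the proof of the preceding lemma; what remains is precisely (\ref{3.13}).

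Conversely, from (\ref{3.13}) the function $h(t) := u(t) - AJ^\alpha u(t) - J^\alpha f(t)$ is a representative of the zero class, hence lies in $C_0(\R^+,\X)$. Setting $t = 0$ yields $h(0) = u(0)$, so $\phi := h - u(0)$ satisfies $\phi(0) = 0$. To produce a witness $\epsilon \in C_0(\R^+,\X)$ that $u$ is an asymptotic mild solution I need $J^\alpha \epsilon = \phi$, i.e., $\epsilon = D^\alpha_C \phi$; granting this, $u$ automatically solves the mild-solution identity with source $f + \epsilon$, completing the equivalence.

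The main obstacle is producing this $\epsilon$ in $C_0(\R^+,\X)$ from the mere continuity of $\phi$, since the Caputo derivative is not defined pointwise in general. I would avoid a direct inversion of $J^\alpha$ by substituting the relation $u = AJ^\alpha u + J^\alpha f + h$ back into the expression for $h$: boundedness of $A$ gives $A J^\alpha h \in C_0(\R^+,\X)$ by part (i), so $\phi$ is rewritten as $J^\alpha$ applied to a function explicitly in $C_0(\R^+,\X)$. This places $\phi$ in the range $J^\alpha(C_0(\R^+,\X))$ and yields the required $\epsilon \in C_0(\R^+,\X)$.
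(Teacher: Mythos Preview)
Your forward direction has a real gap. The constant function $t\mapsto u(0)$ is \emph{not} in $C_0(\R^+,\X)$ when $u(0)\neq 0$, so applying the quotient map directly to
\[
u(t)=AJ^\alpha u(t)+J^\alpha f(t)+J^\alpha\epsilon(t)+u(0)
\]
leaves the nonzero class $\widetilde{u(0)}$ on the right; the estimate $\lim_{t\to\infty}\int_0^1 g_\alpha(t+1-\tau)[\cdot]\,d\tau=0$ from the preceding lemma does not ``absorb'' this term---that estimate only appears \emph{after} one applies the shift $S$, which converts the initial datum into the decaying integral. The paper handles the forward direction precisely through part~(ii) of the preceding lemma, obtaining $\tilde S\tilde u=\tilde A\tilde J^\alpha\tilde S\tilde u+\tilde S\tilde J^\alpha\tilde f$; since $(\tilde S(t))_{t\in\R}$ is a $C_0$-group of isometries on $\Y$, $\tilde S$ is invertible and commutes with $\tilde A$ and $\tilde J^\alpha$, and cancelling $\tilde S$ yields~(\ref{3.13}).

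For the converse, your proposed ``substitution'' is circular: plugging $u=AJ^\alpha u+J^\alpha f+h$ back into $h:=u-AJ^\alpha u-J^\alpha f$ just returns $h=h$ and produces no expression of $\phi=h-u(0)$ as $J^\alpha$ applied to a $C_0$ function. The paper does \emph{not} attempt to invert $J^\alpha$ at all: it unwinds~(\ref{3.13}) to the pointwise identity
\[
u(t)=AJ^\alpha u(t)+J^\alpha f(t)+\epsilon_4(t),\qquad \epsilon_4\in C_0(\R^+,\X),
\]
using only that $A$ is bounded and $J^\alpha$ preserves $C_0$, and declares this sufficient. In other words, the paper is content with the perturbed integral identity modulo a $C_0$ remainder, without insisting that the remainder be of the specific form $J^\alpha\epsilon+u(0)$. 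If you want that stronger conclusion you would need an honest surjectivity/range argument for $J^\alpha$ on $C_0$, which your substitution does not provide.
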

\begin{proof}
It suffices to show that if $u\in BUC(\R^+,\X)$ satisfies (\ref{3.13}), then it must be an asymptotic mild solution of Eq.(\ref{fde}). In fact,  there are functions $\epsilon_i(\cdot )\in C_0(\R^+,\X)$ such that
\begin{align}
u(t)+\epsilon_1(t) &= AJ^\alpha [u+\epsilon_1](t) +\epsilon_2(t)+J^\alpha f(t) +\epsilon_3(t)
\end{align}
Therefore, as $A$ is a bounded linear operator, 
\begin{align}
u(t)+&= AJ^\alpha u(t) +J^\alpha f(t) +\epsilon_4(t),
\end{align}
where $\epsilon_4(t):=AJ^\alpha \epsilon_1(t)- \epsilon_1(t)+\epsilon_2(t)+\epsilon_3(t)$. as $A$ is a bounded linear operator, and $\tilde J^\alpha$ are bounded,
$\epsilon_4\in C_0(\R^+,\X)$.
\end{proof}

\begin{theorem}\label{the main2}
Assume that $A$ is a bounded operator, $e^{\Sigma _i(A,\alpha)}\backslash \{ 1\}$ is closed, and $f$ is asymptotic $1$-periodic. Then, if Eq.(\ref{fde}) has an asymptotic 1-perioidc solution if and only if it has a bounded uniformly continuous asymptotic solution on $\R^+$. 
\end{theorem}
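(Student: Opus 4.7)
The easy direction is immediate: an asymptotic $1$-periodic solution is a $BUC$ function by definition. So I would focus on the converse. Let $u\in BUC(\R^+,\X)$ be an asymptotic mild solution. Combining Corollary \ref{cor 4.2} with the hypothesis that $f$ is asymptotic $1$-periodic (so $sp(f)\subset 2\pi\Z$ by Theorem \ref{the 3.8}(iv)) gives $i\cdot sp(u) \subset \Sigma_i(A,\alpha)\cup 2\pi i\Z$. Passing to the circular spectrum through Corollary \ref{cor 1} and invoking the closedness hypothesis on $e^{\Sigma_i(A,\alpha)}\setminus\{1\}$, one obtains
$$
\sigma(u) \;\subset\; \{1\} \sqcup K_2, \qquad K_2 := e^{\Sigma_i(A,\alpha)} \setminus \{1\},
$$
i.e. $\sigma(u)$ sits in the disjoint union of two compact subsets of the unit circle.

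The heart of the proof is a Riesz-projection decomposition. Let $\mathcal{N} \subset \Y$ be the smallest closed subspace containing $\tilde u$ and $\tilde f$ that is invariant under $\tilde S(t)$ (for all $t\in\R$), $\tilde A$, and $\tilde J^\alpha$. Using that $\tilde A$ and $\tilde J^\alpha$ are bounded on $\Y$, commute with translations in $\Y$, and preserve spectral inclusions through the identities $\widehat{Ag}=\tilde A\hat g$ and $\widehat{J^\alpha g}=\tilde J^\alpha\hat g$, one checks that $(\tilde S(t))_{t\in\R}$ restricts to a $C_0$-group of isometries on $\mathcal{N}$ with $\sigma(\tilde S(1)|_{\mathcal{N}})\subset\{1\}\sqcup K_2$. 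Form the Riesz projection $P_1$ on $\mathcal{N}$ associated with the spectral set $\{1\}$, put $\tilde u_1:=P_1\tilde u$, and let $u_1\in BUC(\R^+,\X)$ be any representative of $\tilde u_1$.

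By construction $\sigma(u_1)\subset\{1\}$, hence $sp(u_1)\subset 2\pi\Z$ by Corollary \ref{cor 1}, and Theorem \ref{the 3.8}(iv) (with $p=0$) shows that $u_1$ is asymptotic $1$-periodic. Because $\sigma(f)\subset\{1\}$, the element $\tilde f$ already sits in $P_1\mathcal{N}$, so $P_1\tilde f=\tilde f$. Since $P_1$ is built from $R(\lambda,\tilde S(1)|_{\mathcal{N}})$, it commutes with every bounded operator on $\mathcal{N}$ that commutes with $\tilde S(1)$, in particular with $\tilde A$ and $\tilde J^\alpha$. Applying $P_1$ to the identity $\tilde u=\tilde A\tilde J^\alpha\tilde u+\tilde J^\alpha\tilde f$ furnished by Lemma \ref{lem.3.12} therefore yields $\tilde u_1=\tilde A\tilde J^\alpha\tilde u_1+\tilde J^\alpha\tilde f$, and a second invocation of Lemma \ref{lem.3.12} identifies $u_1$ as the desired asymptotic $1$-periodic asymptotic mild solution.

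I expect the main obstacle to lie in the second paragraph: one must carefully construct $\mathcal{N}$ so that $\tilde A$ and $\tilde J^\alpha$ actually leave it invariant, and verify that the splitting $\sigma(\tilde S(1)|_{\mathcal{N}})\subset\{1\}\sqcup K_2$ persists after enlarging $\mathcal{M}_u$. The closedness hypothesis on $e^{\Sigma_i(A,\alpha)}\setminus\{1\}$ is indispensable here: it prevents the circular spectrum from accumulating at $1$, and it is precisely what makes the Riesz functional calculus on the splitting $\{1\}\sqcup K_2$ available.
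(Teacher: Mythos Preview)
Your argument is essentially the paper's. Both use Lemma~\ref{lem.3.12} to recast ``asymptotic mild solution'' as an identity in $\Y$, locate $\sigma(u)$ inside the disjoint union $\{1\}\sqcup\bigl(e^{\Sigma_i(A,\alpha)}\setminus\{1\}\bigr)$, project onto the $\{1\}$-component, and use commutation of the projection with $\tilde A$ and $\tilde J^\alpha$ (through commutation with $\tilde S$) to conclude that the projected class still satisfies the identity, hence represents an asymptotic $1$-periodic asymptotic mild solution.

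The only real difference is where the projection lives. You build a custom invariant subspace $\mathcal{N}\supset\{\tilde u,\tilde f\}$ and take the Riesz projection of $\tilde S(1)|_{\mathcal{N}}$, whereas the paper works directly on the spectral subspace $\Y_\Lambda:=\{\tilde g\in\Y:\sigma(g)\subset\Lambda\}$ with $\Lambda=e^{\Sigma_i(A,\alpha)}\cup\{1\}$, and invokes \cite[Proposition~3.4(iii)]{luoloiminmat}, which already furnishes a projection $P:\Y_\Lambda\to\Y_{\{1\}}$ commuting with every bounded operator that commutes with $\tilde S$. This sidesteps precisely the obstacle you flag in your last paragraph: there is no need to verify that $\sigma(\tilde S(1)|_{\mathcal{N}})\subset\{1\}\sqcup K_2$ after enlarging beyond $\mathcal{M}_u$, because the spectral splitting and the commutation property of $P$ come packaged in the cited result. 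Your route is viable (in fact $\mathcal{N}\subset\Y_\Lambda$, so the obstacle dissolves once one knows the properties of $\Y_\Lambda$), but the paper's formulation is a bit more economical.
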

\begin{proof}
Consider the operator
$$
L\tilde u :=  \tilde u -(\tilde{A} \tilde{J^\alpha}  \tilde u+ \tilde J^\alpha  \tilde f).
$$
It is clear that $u\in BUC(\R^+,\X)$ is an asymptotic mild solution of Eq.(\ref{fde}) if and only if $L\tilde u =0$.
Set $\Lambda := e^{\Sigma _i(A,\alpha)}\cup \{ 1\}$, $\Lambda_1 := e^{\Sigma_i (A,\alpha)}\backslash \{ 1\}$ and $\Lambda_2:= \{1\}$. Then, by the assumption $\Lambda_1$ and $\Lambda_2$ are two disjoint closed subset of the unit circle $\Gamma$. By Corollary \ref{cor 4.2}, $\sigma (u)\subset \Lambda $. 
Moreover, by \cite[Assertion (iii), Proposition 3.4]{luoloiminmat}
there exists a projection $P:\Y_\Lambda \to \Y_{\Lambda_2}$ that commutes with any bounded operator that commutes with $\tilde S$. therefore, 
as $\tilde SL = L\tilde S$, we have $PL=LP$. Consequently, 
\begin{align*}
0&= PL\tilde u \\
 &= LP\tilde u \\
 &= P\tilde u -(\tilde{A} \tilde{J^\alpha} P \tilde u+ \tilde J^\alpha  \tilde f).
\end{align*}
That means, $P\tilde u$ contains an asymptotic mild solution of Eq.(\ref{fde}). Since $P\tilde u\in \Y_{\Lambda_2}$ the spectrum of this solution must be part of $\Lambda_2=\{1\}$. By \cite[Proposition 3.4]{luoloiminmat}, this solution is asymptotic $1$-periodic.
\end{proof}
\begin{remark}
The above theorem is an analog version of the famous result in \cite{mas} for fractional differential equations. Another analog for asymptotic anti-$1$-periodic solutions can be obtained by using the set $\{-1\}$ instead of $\{1\}$ in the statements of the theorem.
\end{remark}
\subsection{The case with unbounded $A$}
Now we can state an analog of Theorem \ref{the main2} in the case of $A$ is an unbounded operator. 
We assume that $A: D(A)\subset X\to X$ is a closed linear operator which generates a $C_0$-semigroup $\{T(t)\}_{t\geq 0}$ in $X$ and $\{\lambda^\alpha:Re\lambda>0\}\subset \rho(A)$. Taking the Laplace transforms of both sides of \eqref{fde} gives
\begin{eqnarray}
	\hat u(\lambda ) &=& \hat S_\alpha (\lambda )x +\widehat{\big((\cdot )^{\alpha-1}  P_\alpha \big)}(\lambda )\cdot \hat f(\lambda ) .
\end{eqnarray}
where $\widehat{(\cdot )^{\alpha-1} P_\alpha}(\lambda)=(\lambda^\alpha-A)^{-1}, \hat S_\alpha(\lambda)=\lambda^{\alpha-1}(\lambda^\alpha-A)^{-1}$.
The inversion of the Laplace transform shows that $u(t)$ has the following form
$$
u(t)=S_\alpha(t)x+\int_0^t (t-s)^{\alpha-1}P_\alpha(t-s)f(s)ds.
$$
Hence, asymptotic mild solutions of  \eqref{fde} are defined as functions $u\in BUC(\R^{+},\X)$ satisfy  
\begin{equation}\label{eq:1a}
	u(t)=S_\alpha(t)x+\int_0^t (t-s)^{\alpha-1}P_\alpha(t-s)(f(s)+\epsilon(s))ds, \; t\geq 0.
\end{equation}
By the subordination principle (see \cite{baz}), $S_\alpha$ and $P_\alpha, \alpha \in (0,1]$, exist if $A$ generates a $C_0$- semigroup $\{T(t)\}_{t\geq 0}$. The explicit  formulas of  $S_\alpha$ and $P_\alpha$ were given in \cite{baz, Zhou}:
\begin{align*} 
	S_\alpha(t)&=\int_0^\infty\Phi_\alpha(s)T(t^\alpha s)ds,\\
	P_\alpha(t)&=\alpha\int_0^\infty s\Phi_\alpha(s)T(t^\alpha s)ds,
\end{align*}
where $\Phi_\alpha$ is a probability density function defined on $(0,\infty)$, that is, $\Phi_\alpha(t)\geq 0$ and
$\int_0^\infty\Phi_\alpha(t) dt=1$.  Assume that the semigroup $\{T(t)\}_{t\geq 0}$ generated by $A$ is exponentially stable, i.e., there are positive numbers $a, M$ such that
$$
\|T(t)\|\leq Me^{-at}, t\geq 0.
$$
By the fact that (see  \cite{Wang}) 
	$$\int_0^\infty \Phi_\alpha(\theta)e^{-z\theta}d\theta=E_{\alpha,1}(-z),$$
	$$ \int_0^\infty \alpha\theta\Phi_\alpha(\theta)e^{-z\theta}d\theta=E_{\alpha,\alpha}(-z),$$
	we have
	\begin{align*}
		\|S_\alpha(t)\|&\leq \int_0^\infty \Phi_\alpha(\theta)\|T(\theta t^\alpha)\|d\theta\\ 
		& \leq \int_0^\infty \Phi_\alpha(\theta)e^{-at^{\alpha}\theta}d\theta=E_{\alpha,1}(-at^{\alpha}),
	\end{align*}
  and 
 \begin{align*}
 	\|P_\alpha(t)\|& \leq \alpha \int_0^\infty\theta \Phi_\alpha(\theta)\|T(\theta t^\alpha)\|d\theta\\ 
 	& \leq M \alpha \int_0^\infty\theta \Phi_\alpha(\theta)e^{-a t^\alpha \theta}d\theta\\
 	&=ME_{\alpha,\alpha}(-a t^\alpha ).
 \end{align*}
 where $E_{\alpha,\beta}$, the Mittag-Leffler function, is defined as follows
$$E_{\alpha,\beta}(z)=\sum\limits_{n=0}^\infty\frac{z^n}{\Gamma(\alpha n+\beta)},\; \alpha,\beta>0,\; z\in \C.$$ 
 For $\alpha \in (0,1), t\in \R$, these Mittag-Leffler functions  have 
\begin{align*}
\lim\limits_{t\to -\infty}E_{\alpha,1}(t)=\lim\limits_{t\to -\infty}E_{\alpha,\alpha}(t)=0.
\end{align*}
From the estimates above, we obtain $$\|S_\alpha(t)\|, \|P_\alpha(t)\| \to 0, t\to\infty.$$
and 
\begin{align*}
	\int_{-\infty}^t (t-s)^{\alpha-1}\|P_\alpha(t-s)\| ds&\leq M \int_{-\infty}^t (t-s)^{\alpha-1}E_{\alpha,\alpha}(-a (t-s)^\alpha )ds\\
	&\leq \frac{M}{a}E_{\alpha,1}(-a(t-s)^\alpha)\bigg|_{-\infty}^t=\frac{M}{a}.
\end{align*}

It means that $S_\alpha(t)x\in C_0(\R^+,\X)$. By the same argument as in the proof Lemma \ref{lem.3.12}, we have the following result.
\begin{lemma}\label{lem 3.14}
Assume that $A$ generate a exponentially stable semigroup and $\{\lambda^\alpha: Re\lambda>0\} \subset \rho(A)$. Then, $u\in BUC(\R^+,\X)$  is an asymptotic mild solution of Eq.(\ref{fde}) if and only if
	\begin{equation}\label{3.13a}
		\tilde u = \tilde F_\alpha  \tilde f,
	\end{equation}
where $F_\alpha f (t)=\int\limits_0^t (t-s)^{\alpha-1}P_\alpha(t-s)f(s)ds, \; t>0.$
\end{lemma}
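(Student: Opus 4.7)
The plan is to adapt the strategy of Lemma \ref{lem.3.12} to the unbounded-generator setting, now working with the variation-of-parameters formula $S_\alpha(\cdot)x + F_\alpha(f+\epsilon)(\cdot)$ in place of the $AJ^\alpha$ integral equation. The argument rests on two auxiliary facts drawn from the preceding Mittag-Leffler estimates: (a) $S_\alpha(\cdot)x \in C_0(\R^+,\X)$ for every $x\in\X$, which follows at once from $\|S_\alpha(t)\| \le E_{\alpha,1}(-at^\alpha) \to 0$; and (b) $F_\alpha$ is a bounded linear operator on $BUC(\R^+,\X)$ that leaves $C_0(\R^+,\X)$ invariant, so it descends to a well-defined bounded operator $\tilde F_\alpha$ on $\Y$.

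Establishing (b) is the first substantive step. Boundedness and continuity of $F_\alpha g$ for $g\in BUC(\R^+,\X)$ follow from the kernel estimate $\int_0^\infty r^{\alpha-1}\|P_\alpha(r)\|\,dr \le M/a$ already derived in the excerpt, via standard facts about convolution-type operators with $L^1$ kernels. For $C_0$-invariance I would proceed by a standard tail-splitting: given $g\in C_0(\R^+,\X)$ and $\delta>0$, choose $T$ with $\|g(s)\|<\delta$ for $s>T$ and write
$$F_\alpha g(t) = \int_0^T (t-s)^{\alpha-1}P_\alpha(t-s)g(s)\,ds + \int_T^t (t-s)^{\alpha-1}P_\alpha(t-s)g(s)\,ds.$$
The second term is bounded in norm by $\delta M/a$, while the first is controlled by $\|g\|_\infty$ times the tail $\int_{t-T}^\infty r^{\alpha-1}\|P_\alpha(r)\|\,dr$ of an integrable function, hence tends to $0$ as $t\to\infty$. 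Letting first $t\to\infty$ and then $\delta\to 0$ yields $F_\alpha g\in C_0(\R^+,\X)$.

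With (a) and (b) in hand, the forward implication is immediate: if $u(t) = S_\alpha(t)x + F_\alpha f(t) + F_\alpha\epsilon(t)$ for some $x\in\X$ and $\epsilon\in C_0(\R^+,\X)$, then $u - F_\alpha f = S_\alpha(\cdot)x + F_\alpha\epsilon \in C_0(\R^+,\X)$, so $\tilde u = \tilde F_\alpha \tilde f$ in $\Y$. For the converse, assume $\tilde u = \tilde F_\alpha \tilde f$ and set $\eta := u - F_\alpha f \in C_0(\R^+,\X)$. Since $F_\alpha f$ is itself a mild solution of Eq.(\ref{fde}) with initial value $0$ and no perturbation, $u$ differs from a mild solution only by the $C_0$ quantity $\eta$, and one then proceeds exactly as in the proof of Lemma \ref{lem.3.12}, packaging $\eta$ (together with the trivial choice $x=0$) as the allowed perturbation in the definition of an asymptotic mild solution. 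The main obstacle is precisely this final step: turning the abstract quotient-space identity into a genuine pointwise decomposition $u = S_\alpha(\cdot)x + F_\alpha(f+\epsilon)$, since $\eta$ need not itself lie in the range of $F_\alpha$; as in Lemma \ref{lem.3.12}, the step is handled at the level of equivalence classes in $\Y$, which is legitimate precisely because the definition of asymptotic mild solution is invariant under $C_0$ perturbations.
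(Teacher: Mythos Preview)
Your approach coincides with the paper's, which ``proves'' Lemma~\ref{lem 3.14} only by pointing to Lemma~\ref{lem.3.12} together with the preceding Mittag--Leffler estimates giving $S_\alpha(\cdot)x\in C_0(\R^+,\X)$. Your step~(b)---that $F_\alpha$ maps $C_0$ into $C_0$ via a tail-splitting argument---is exactly the computation the paper carries out, though the paper places it in the proof of the next lemma (Lemma~\ref{lem 3.15}) rather than here. The forward implication is handled identically in both.

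You are right to flag the obstacle in the converse, and your resolution matches the paper's level of rigor but does not actually close the gap. Declaring that ``the definition of asymptotic mild solution is invariant under $C_0$ perturbations'' is the assertion in question, not an argument for it: with the representation $u=S_\alpha(\cdot)x+F_\alpha(f+\epsilon)$ adopted in~(\ref{eq:1a}), one would need every $\eta\in C_0(\R^+,\X)$ to decompose as $S_\alpha(\cdot)x'+F_\alpha\epsilon'$ for some $x'\in\X$, $\epsilon'\in C_0$, and nothing established so far yields this. The paper's Lemma~\ref{lem.3.12} glosses over the analogous point (there one would need $\epsilon_4-u(0)\in J^\alpha C_0(\R^+,\X)$). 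What is actually used downstream in Theorems~\ref{the main2} and~\ref{the main3} is only that the \emph{class} $\tilde u$ contains an asymptotic mild solution, and for that $F_\alpha f$ itself (with $x=0$, $\epsilon=0$) already does the job; the lemma as literally stated asserts more than is proved in either treatment.
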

In the following we assume that $e^{\Sigma_i (A,\alpha)}\backslash \{ 1\}$ is closed, and $f$ is asymptotic $1$-periodic. Then, if we use the notations in the proof of Theorem \ref{the main2} with 
$\Lambda := e^{\Sigma_i (A,\alpha)}\cup \{ 1\}$, $\Lambda_1 := e^{\Sigma (A,\alpha)}\backslash \{ 1\}$ and $\Lambda_2:= \{1\}$ and the projection $P:\Y_\Lambda \to \Y_{\Lambda_2}$.
\begin{lemma}\label{lem 3.15}
Under the assumption of Lemma \ref{lem 3.14} we assume that $e^{\Sigma_i (A,\alpha)}\backslash \{ 1\}$ is closed, and $f$ is asymptotic $1$-periodic. Then, if $u\in BUC(\R^+,\X)$ is an asymptotic mild solution of Eq.(\ref{fde}), 
\begin{equation}
P\tilde u = \tilde F_\alpha \tilde f.
\end{equation}
\end{lemma}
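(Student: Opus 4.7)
The plan is to mirror the argument used for Theorem \ref{the main2}, but applied to the integral representation from Lemma \ref{lem 3.14} rather than to the implicit operator $L$. By that lemma, the hypothesis that $u$ is an asymptotic mild solution is equivalent to $\tilde u=\tilde F_\alpha\tilde f$ in $\Y$, so the target equality $P\tilde u=\tilde F_\alpha\tilde f$ reduces to showing that $P$ fixes $\tilde F_\alpha\tilde f$. I would do this in three steps: (a) verify that $F_\alpha$ induces a well-defined bounded operator $\tilde F_\alpha$ on $\Y$; (b) show that $\tilde F_\alpha$ commutes with the translation $\tilde S$; (c) combine these with the invariance property of $P$ and the asymptotic $1$-periodicity of $f$.

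For step (a), boundedness of $F_\alpha$ on $BUC(\R^+,\X)$ follows from the estimate $\int_{-\infty}^{t}(t-s)^{\alpha-1}\|P_\alpha(t-s)\|\,ds\le M/a$ already displayed just before Lemma \ref{lem 3.14}, while a standard dominated-convergence argument yields uniform continuity. To see that $F_\alpha$ preserves $C_{0}(\R^+,\X)$, I would split $F_\alpha f(t)=\int_0^{t/2}+\int_{t/2}^{t}$: the first piece is small because $\int_{t/2}^{\infty}u^{\alpha-1}E_{\alpha,\alpha}(-au^\alpha)\,du\to 0$, and the second because $\sup_{s\ge t/2}\|f(s)\|\to 0$. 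Hence $\tilde F_\alpha:\Y\to\Y$ is a well-defined bounded linear operator.

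The main technical point is step (b). A change of variable $\tau=s-1$ in $SF_\alpha f(t)=F_\alpha f(t+1)$ yields
\begin{equation*}
SF_\alpha f(t)-F_\alpha Sf(t)=\int_{-1}^{0}(t-\tau)^{\alpha-1}P_\alpha(t-\tau)f(\tau+1)\,d\tau,
\end{equation*}
and as $t\to\infty$ this boundary integral is dominated by $M\|f\|_\infty\cdot\sup_{\tau\in[-1,0]}(t-\tau)^{\alpha-1}E_{\alpha,\alpha}(-a(t-\tau)^\alpha)$, which tends to zero because of the decay of the Mittag-Leffler function together with the power factor. Hence $SF_\alpha f-F_\alpha Sf\in C_{0}(\R^+,\X)$, i.e.\ $\tilde S\tilde F_\alpha=\tilde F_\alpha\tilde S$ in $\Y$.

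For step (c), the asymptotic $1$-periodicity of $f$ combined with Theorem \ref{the 3.8}(iv) and Corollary \ref{cor 1} gives $\sigma(f)\subset\{1\}=\Lambda_2$, hence $\tilde f\in\Y_{\Lambda_2}$ and $P\tilde f=\tilde f$. By \cite[Proposition 3.4]{luoloiminmat} the projection $P$ commutes with every bounded operator on $\Y_\Lambda$ that commutes with $\tilde S$; in particular with $\tilde F_\alpha$, so applying $P$ to $\tilde u=\tilde F_\alpha\tilde f$ yields $P\tilde u=\tilde F_\alpha P\tilde f=\tilde F_\alpha\tilde f$, as desired. The hardest part will be the commutation in step (b), which requires uniform control of the boundary term in the quotient; the remaining pieces are routine consequences of the spectral machinery already developed in the paper.
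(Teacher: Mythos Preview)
Your proposal is correct and follows essentially the same route as the paper: both establish that $F_\alpha$ preserves $C_0(\R^+,\X)$ via a $t/2$-splitting, both verify $\tilde S\tilde F_\alpha=\tilde F_\alpha\tilde S$ by isolating the boundary integral $\int_0^1(t+1-s)^{\alpha-1}P_\alpha(t+1-s)f(s)\,ds$ and showing it lies in $C_0$, and both conclude by applying $P$ to the identity $\tilde u=\tilde F_\alpha\tilde f$ from Lemma~\ref{lem 3.14}. Your step~(c) is in fact slightly more explicit than the paper's, since you spell out why $P\tilde F_\alpha\tilde f=\tilde F_\alpha\tilde f$ (namely $P\tilde f=\tilde f$ together with the commutation), whereas the paper writes this equality without comment.
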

\begin{proof}
 The operator $F_\alpha$ maps $C_0(\R^+,\X) \to C_0(\R^+,\X)$, so it induces a linear bounded operator $\tilde F_\alpha$ from $\Y \to \Y$.
If $g\in C_0(\R^+,\X)$, then, for each $\epsilon>0$, there exists a number $T>0$ such that $\|g\|\leq \epsilon$ for all $s\geq T.$
Then for all $t\geq 2T$, we have
\begin{align*}
\|F_\alpha f(t)\|\leq &\|g\|\int_0^{t/2}(t-s)^{\alpha -1}\|P_\alpha(t-s)\|ds+\epsilon\int_{t/2}^t(t-s)^{\alpha-1}\|P_\alpha(t-s)\|ds\\
 \leq & \|g\|\int_{0}^{t/2} (t-s)^{\alpha-1}E_{\alpha,\alpha}(-a (t-s)^\alpha )ds +\epsilon\frac{M}{a}\\
 =&\|g\|(E_{\alpha,1}(-at^\alpha)-E_{\alpha,1}(-a(t/2)^\alpha)) +\epsilon\frac{M}{a}.
\end{align*}
 
 Moreover,  
 \begin{align*}
  SF_\alpha f(t)= & \int_{0}^{t+1}(t+1-s)^{\alpha-1}P_\alpha(t+1-s)f(s)ds\\
 = & \int_{1}^{t+1}(t+1-s)^{\alpha-1}P_\alpha(t+1-s)f(s)ds+\int_{0}^{1}(t+1-s)^{\alpha-1}P_\alpha(t+1-s)f(s)ds\\
 	=& F_\alpha Sf(t) +h(t).
 \end{align*}
 Note that 
 $$\|h(t)\|=\|\int_{0}^{1}(t+1-s)^{\alpha-1}P_\alpha(t+1-s)f(s)ds\|\leq \|f\|(E_{\alpha,1}(-a(t+1)^\alpha)-E_{\alpha,1}(-a(t)^\alpha)),$$
 it means that $h\in C_0(\R^+,\X)$ and 
 $\tilde F_\alpha $ commutes with $\tilde S$. 
 Using notations in Theorem \ref{the main2}, it easy to see that if $u\in BUC(\R^+,\X)$ is asymptotic mild solution of \eqref{fde}, then
 \begin{align*}
  0=&P(\tilde u- \tilde F_\alpha  \tilde f)\\
  =&P\tilde u- P\tilde F_\alpha \tilde f\\
  =&P\tilde u- \tilde F_\alpha \tilde f. 
  \end{align*}
  \end{proof}
As a consequence of Lemma \ref{lem 3.15} we note that $\sigma(F_\alpha f)\subset \sigma(f).$
From arguments above, we also obtain the following theorem.
 \begin{theorem}\label{the main3}
 Assume that $A$ generate a exponentially stable semigroup and $\{\lambda^\alpha: Re\lambda>0\} \subset \rho(A)$, $e^{\Sigma _i(A,\alpha)}\backslash \{ 1\}$ is closed, and $f$ is asymptotic $1$-periodic. Then, if Eq.(\ref{fde}) has an asymptotic 1-periodic solution if and only if it has a bounded uniformly continuous asymptotic solution on $\R^+$. 
 \end{theorem}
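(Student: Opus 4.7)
The ``only if'' direction is trivial, since every asymptotic $1$-periodic function is automatically in $BUC(\R^+,\X)$, hence is itself a bounded uniformly continuous asymptotic mild solution. The whole content is in the ``if'' direction, and the strategy is to mirror the proof of Theorem \ref{the main2}, replacing the identity $L\tilde u=0$ by the fixed-point identity $\tilde u=\tilde F_\alpha\tilde f$ from Lemma \ref{lem 3.14}, and then running the same spectral decomposition argument with the projection $P:\Y_\Lambda\to\Y_{\Lambda_2}$ supplied by \cite[Proposition 3.4]{luoloiminmat}.

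Let $u\in BUC(\R^+,\X)$ be an asymptotic mild solution of Eq.(\ref{fde}). First I invoke Corollary \ref{cor 4.2} to get $i\cdot sp(u)\subset \Sigma_i(A,\alpha)\cup i\cdot sp(f)$. Since $f$ is asymptotic $1$-periodic, Theorem \ref{the 3.8}(iv) gives $sp(f)\subset 2\pi\Z$, so passing through Corollary \ref{cor 1} we obtain
\begin{equation*}
\sigma(u)\subset \overline{e^{i\cdot sp(u)}}\subset \overline{e^{\Sigma_i(A,\alpha)}\cup\{1\}}=e^{\Sigma_i(A,\alpha)}\cup\{1\}=\Lambda,
\end{equation*}
where the closedness hypothesis on $e^{\Sigma_i(A,\alpha)}\setminus\{1\}$ is used to identify the closure with $\Lambda$ and to guarantee that $\Lambda_1=e^{\Sigma_i(A,\alpha)}\setminus\{1\}$ and $\Lambda_2=\{1\}$ are two disjoint closed subsets of the unit circle $\Gamma$. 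Thus $\tilde u\in \Y_\Lambda$ and the spectral projection $P:\Y_\Lambda\to\Y_{\Lambda_2}$ is available.

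Next I apply $P$ to the identity of Lemma \ref{lem 3.14}. By Lemma \ref{lem 3.15}, $P\tilde u=\tilde F_\alpha\tilde f$, and this is precisely the identity characterizing asymptotic mild solutions in Lemma \ref{lem 3.14}. Therefore any representative of the class $P\tilde u$ is itself an asymptotic mild solution of Eq.(\ref{fde}). Since $P\tilde u\in \Y_{\Lambda_2}$ and $\Lambda_2=\{1\}$, we have $\sigma(P\tilde u)\subset\{1\}$; invoking Corollary \ref{cor 1} and Theorem \ref{the 3.8}(iv) once more gives $sp(v)\subset 2\pi\Z$ for any representative $v$ of $P\tilde u$, so $v$ is asymptotic $1$-periodic. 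This produces the desired asymptotic $1$-periodic asymptotic mild solution.

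The main technical obstacle I expect is the step where $P$ is allowed to be applied to the identity $\tilde u=\tilde F_\alpha\tilde f$: this requires showing that $\tilde F_\alpha$ commutes with $\tilde S$ on $\Y_\Lambda$, so that $P$ commutes with $\tilde F_\alpha$ (as $P$ commutes with any bounded operator commuting with $\tilde S$). The commutation $\tilde S\tilde F_\alpha=\tilde F_\alpha\tilde S$ was essentially established inside the proof of Lemma \ref{lem 3.15} via the Mittag--Leffler decay estimate for the boundary term $h(t)$; that estimate, together with the exponential stability of $\{T(t)\}_{t\ge 0}$ and the subordination formulas for $S_\alpha, P_\alpha$, is the analytic heart of the argument and replaces the role played by the boundedness of $A$ in Theorem \ref{the main2}. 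Everything else is a direct transcription of the decomposition argument used there.
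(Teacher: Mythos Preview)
Your proposal is correct and follows essentially the same route as the paper: the theorem is stated immediately after Lemma~\ref{lem 3.15} with the remark ``From arguments above, we also obtain the following theorem,'' and the intended proof is exactly the one you wrote out---use Lemma~\ref{lem 3.14} to characterize asymptotic mild solutions by $\tilde u=\tilde F_\alpha\tilde f$, use Lemma~\ref{lem 3.15} (whose proof contains the commutation $\tilde S\tilde F_\alpha=\tilde F_\alpha\tilde S$ via the Mittag--Leffler decay of the boundary term) to get $P\tilde u=\tilde F_\alpha\tilde f$, and then read off that any representative of $P\tilde u$ is an asymptotic $1$-periodic asymptotic mild solution. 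Your write-up is in fact more explicit than the paper's, particularly in spelling out the spectral inclusion $\sigma(u)\subset\Lambda$ and in flagging the commutation step as the analytic substitute for the boundedness of $A$ used in Theorem~\ref{the main2}.
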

\subsection{Examples}

%%%%%%%%%%%%%%%%%%%%%%%%%%%%%%%%%%%%%%%%%%%
\begin{example}\label{3.18}
	Consider the initial value problem 
	\begin{equation}\label{exa1}
		\begin{cases}
			D^\alpha_t u(x,t)= a u_{xx}(x,t) + f(x,t), \ \ x\in \Omega=(0,\pi),\ t> 0, \cr
			u(0,t)=u(\pi ,t)=0 , \ \ t > 0,
			\cr
			u(x,0)=u_0(x),\; x\in \Omega,
		\end{cases}
	\end{equation}
	where $D^\alpha_t u(x,t)$ is the fractional derivative in Caputo's sense in $t$ of degree $\alpha\in (0,1)$, $u(x,t), f(x,t)$ are scalar-valued functions such that $f(\cdot , t)\in L^2(\Omega)$ for each $t\in\R^+$, $f:\R^+\ni t \mapsto f(\cdot ,t)\in L^2(\Omega)$ is uniformly continuous and bounded, and
	and $a>0$ is given. We define the space 
	${\X}:=L^2(\Omega)$ and $A: {\X}\to {\X}$ by the formula
	\begin{equation}\label{exa 3}
		\begin{cases}\hspace{.5cm}
			Ay=ay'', \cr
			D(A)=H^1_0(\Omega)\cap H^2(\Omega).
		\end{cases}
	\end{equation}
	Problem \ref{exa1} can be rewritten  in  the form of an evolution equation 
	\begin{equation}\label{4.3}
		D^\alpha_t u(t)=Au(t) + f(t), \   t\in \R^+, u(0)=u_0,
	\end{equation}
	where $u(t) \in {\X}$, $A$ is defined as above. 
	As is well known the spectrum of $A$ consists of eigenvalues that satisfy this equation
	$$
	\left\{\begin{array}{l}
		\dfrac{d^{2} u}{d x^{2}}=\frac{1}{a}\lambda u \\
		u(0)=u(\pi)=0
	\end{array} \quad \text { in } \Omega . \right .
	$$
	This implies
	$$
	\lambda_{n}=- an^{2} , u_{n}=C \sin (n x), n \in \N
	$$
	Furthermore, $A$ generates a compact semigroup $S(t)$ with
	$$
	\|S(t)\| \leq M \cdot e^{- at}, \forall t \geq 0
	$$
	According to the subordination principle, $A$ generates the subordinated resolvent $S_{\alpha}(t), \alpha \in(0 ; 1)$ such that $\lim _{t \rightarrow \infty}\left\|S_{\alpha}(t)\right\|=0.$ By definition,
	$$
	\Sigma_i(A, \alpha)=\{ \lambda \in i\R : \ \left(\lambda^{\alpha}-A\right)^{-1} \ \mbox{does not exists as a bounded operator}\}.
	$$
	As
	$$
	\lambda^\alpha  \in \sigma (A) \Longleftrightarrow \lambda^{\alpha}=-an^{2}, n=1,2, . .
	$$
	we have
	$$
	\begin{gathered}
		\lambda^{\alpha}=|\lambda|^{\alpha} \cdot e^{i \alpha \arg \lambda} \\
		\lambda^{\alpha}=-an^{2} = an^{2} e^{i \pi}, n=1,2, . .
	\end{gathered}
	$$
	this is equivalent to
	$$
	\left\{\begin{array} { l } 
		{ | \lambda | ^ { \alpha } = an ^ { 2 } , n = 1 , 2 , . . } \\
		{ \operatorname { a r g } \lambda = \frac { \pi } { \alpha } }
	\end{array} \Rightarrow \left\{\begin{array}{l}
		|\lambda|=a^{\frac{1}{\alpha}}n^{\frac{2}{\alpha}}, n=1,2, . . \\
		\arg \lambda=\frac{\pi}{\alpha}
	\end{array}\right.\right.
	$$
	If the choices are $\alpha=\frac{2}{3}, a= \sqrt[3]{\pi^2}$, we get:
	$$
	\lambda_{n}=n^3 \pi e^{i \cdot \frac{3 \pi}{2}}=-in^3\pi , n=1,2, . .
	$$
	Therefore
	$$
	\Sigma_i(A, 2/3)=\left\{-in^3\pi, n=1,2, . .\right\}
	$$
	and 
	$$e^{\Sigma_i (A,2/3)}=  \{ 1\}\cup \{-1\}.$$
		
If $f(\cdot ,t)$ is asymptotic $1$-periodic in $t$, then $e^{i\cdot sp(f)} =\{1\}$. Therefore, by Theorem \ref{the main3}, there is an asymptotic 1-periodic solution $u$ of \eqref{4.3} if and only if there exists an asymptotic mild solution to Eq.(\ref{4.3}). Similar conclusions can be made if $f$ is asymptotic $1$-anti-periodic. This is an analog of Massera Theorem for the fractional differential equation (\ref{4.3}).
\end{example}

 \begin{example}
 We consider Problem \ref{exa1} again with different values of $\alpha$ to illustrate a Katznelson-Tzafriri type result. We will assume that $\alpha$ is a number that satisfies
 $$\frac{\pi}{\alpha}=\frac{\pi}{2}+k\pi,$$
 for a certain $ k=1,2,...$ For instance,
 $$
	\alpha=\frac{2}{2k+1}, 
$$
where $k$ is a positive integer. Then 
$$
\lambda_{n}=\left((2\pi)^{\frac{2}{2k+1}}\cdot n^{2}\right)^{\frac{2k+1}{2}}\cdot e^{i\cdot {\frac{2k+1}{2}\pi}}$$
$$
 \lambda_{n}=\pm i2n^{(2k+1)}\pi=2mi\pi , 
$$
where $ m\in \N^* $.

\medskip
Therefore, assuming that $f(\cdot )$ is asymptotic $1$-periodic, by Theorem \ref{the main}, if $u\in BCU(\R^+,\X)$ is an asymptotic mild solution of Eq. (\ref{4.3}), $u$ must be asymptotic $1$-periodic.
 
 \end{example}
 
\bibliographystyle{amsplain}

\begin{thebibliography}{10}




\bibitem{arebat1}
W. Arendt, C. J.K. Batty. Almost periodic solutions of first- and second-order Cauchy problems. {\it J. Differential Equations}, {\bf 137} (1997), 363-383.

\bibitem{arebathieneu}
W. Arendt, C.J.K  Batty, M. Hieber, F. Neubrander. Vector-valued Laplace transforms and Cauchy problems. Second edition. Monographs in Mathematics, 96. Birkhauser/Springer Basel AG, Basel, 2011. 

\bibitem{baemeenan}
B. Baeumer, M.M. Meerschaert, and E. Nane. Brownian subordinators and fractional Cauchy problems. {\it Trans. Am. Math. Soc.}, {\bf 361} (2009),  3915-3930.

\bibitem{bas}
A. G. Baskakov. Harmonic and spectral analysis of power bounded operators and bounded semigroups of operators on a Banach space. (Russian) {\it Mat. Zametki} {\bf 97} (2015), no. 2, 174--190; translation in {\it Math. Notes}, {\bf 97} (2015), 164-178

\bibitem{batneerab2}
C. J. K. Batty, J. van Neerven, F. R\"abiger, Local spectra and individual stability of uniformly bounded C0-semigroups. {\it Trans. Amer. Math. Soc.} {\bf 350} (1998), no. 5, 2071-2085.

\bibitem{baz}
E. Bazhlekova. "Fractional evolution equations in Banach spaces".Thesis, Technische Universiteit Eindhoven, 2001.

\bibitem{clegrilon}
Ph. Clement, G. Gripenberg, and S.-O. Londen. Schauder estimates for equations with fractional derivatives. {\it Trans. Am. Math. Soc.}, {\bf 352} (2000), 2239-2260.


\bibitem{cue}
E. Cuesta.  Asymptotic behaviour of the solutions of fractional integro-differential equations
and some time discretizations. {\it Discrete Contin. Dyn. Syst.} (Suppl.) (2007), 277-285.

\bibitem{eidkoc}
S.D. Eidelman and A.N. Kochubei. Cauchy problem for fractional diffusion equations.
{\it J. Differential. Equations}, {\bf199}  (2004), 211-255.


\bibitem{engnag}
K-J.  Engel, R.  Nagel,  "{\it One-parameter semigroups for linear evolution equations}". With contributions by S. Brendle, M. Campiti, T. Hahn, G. Metafune, G. Nickel, D. Pallara, C. Perazzoli, A. Rhandi, S. Romanelli and R. Schnaubelt. Graduate Texts in Mathematics, 194. Springer-Verlag, New York, 2000.

\bibitem{hil} R. Hilfer. "Applications of factional calculus in Physics". World Scientific, River Edge, NJ, 2000.

\bibitem{hinnaiminshi}
Y. Hino, T. Naito,  Nguyen Van Minh; J.S. Shin.  "Almost periodic solutions of differential equations in Banach spaces". Stability and Control: Theory, Methods and Applications, 15. Taylor \& Francis, London, 2002.

\bibitem{kattza}
Y. Katznelson, L. Tzafriri. On power bounded operators. {\it J.
Funct. Anal.}, {\bf 68} (1986), 313--328.

\bibitem{keylizwar}
V. Keyantuo, C. Lizama, M.  Warma. Existence, regularity and representation of solutions of time fractional wave equations. {\it Electron. J. Differential Equations}, 2017, Paper No. 222, 42 pp.

\bibitem{kilsritru}
A.A. Kilbas, H.M. Srivastsava, and J.J. Trujillo. "Theory and applications of fractional differential equations". North-Holland Mathematics Studies, Vol. {\bf 204}, Elsevier Science, Amsterdam, 2006.

\bibitem{lizngu}
C.  Lizama, G. M. N'Guerekata. Mild solutions for abstract fractional
differential equations. {\it Applicable Analysis}, {\bf 92} (2013), 1731-1754.

\bibitem{lizngu2}
C.  Lizama, G. M. N'Guerekata. Bounded mild solutions for semilinear integro
differential equations in Banach spaces. {\it Integr. Eqns Oper. Theory}, {\bf  68} (2010),  207-227.

\bibitem{luomin}
Vu Trong Luong, Nguyen Van Minh.
A Simple Spectral Theory of Polynomially Bounded Solutions and Applications to Differential Equations. {\it Semigroup Forum}, {\bf 102} (2021). 456-476.


\bibitem{luoloiminmat}
Vu Trong Luong, Do Van Loi, Nguyen Van Minh, Hideaki Matsunaga. 
A Massera theorem for asymptotic periodic solutions of periodic evolution equations. {\it J. Differential Equations}, {\bf 329} (2022). 371-394.



\bibitem{mas}
J. Massera. The existence of periodic solutions of systems of differential equations. {\it Duke Math. J.}, {\bf 17} (1950), 457-475.

\bibitem{minPAMS2009}
Nguyen Van Minh. Asymptotic behavior of individual orbits of discrete systems. {\it Proc. Amer. Math. Soc.} ,{\bf 137} (2009), 3025-3035.

\bibitem{PAMS1}
Nguyen Van Minh, H. Matsunaga, Hideaki, Nguyen Duc Huy, Vu Trong Luong,  A Katznelson-Tzafriri type theorem for difference equations and applications. {\it Proc. Amer. Math. Soc.} {\bf 150} (2022),  1105-1114.

\bibitem{minngusie}
Nguyen Van Minh; G. N'Guerekata, S. Siegmund,  Circular spectrum and bounded solutions of periodic evolution equations. {\it J. Differential Equations}, {\bf 246} (2009),  3089-3108.

\bibitem{naiminshi}
T. Naito, Nguyen Van Minh, Jong Son Shin.
New spectral criteria for almost periodic solutions of evolution equations. {\it Studia Math.}, {\bf 145} (2001), 97-111.


\bibitem{nee} J. van Neerven. "The Asymptotic behaviour of
semigroups of linear operators". Operator Theory: Advances and Applications, {\bf 88}. Birkhauser Verlag, Basel, 1996. 


\bibitem{pru}
 J. Pr\"uss.
  "Evolutionary integral equations and applications". Monographs in Mathematics, {\bf 87}. Birkhauser Verlag, Basel, 1993.
   
   \bibitem{Zhou} Y. Zhou, F. Jiao, (2010), Existence of mild solutions for fractional  neutral evolution equations, {\it Comp. Math. Appl.}, {\bf 59}(2010), 1063-4475.
 \bibitem{Wang} R.-N. Wang, D.-H. Chen, and T.-J. Xiao. Abstract fractional Cauchy problems with almost sectorial operators. {\it J. Differential Equations}, {\bf 252} (2012), 202-235.
\end{thebibliography}

\end{document}